\newcommand\Es[1]{\mathbb{E}\left[#1\right]}
\renewcommand\Pr[1]{\mathbb{P}\left(#1\right)}
\newcommand\Esx[1]{\mathbb{E}_{x}\left[#1\right]}
\newcommand\Prx[1]{\mathbb{P}_{x}\left(#1\right)}
\newcommand\EsQ[1]{\mathbb{Q}\left[#1\right]}
\newcommand\EsQx[1]{\mathbb{Q}_{x}\left[#1\right]}
\def \N {\mathbb N}
\def \R {\mathbb R}
\def \E {\mathbb E}
\def \la {\lambda}
\def \P {\mathbb{P}}
\def \Q {\mathbb{Q}}
\def \Exp {\textnormal{\textsf{Exp}}}
\newtheorem{thm}{Theorem}
\newtheorem{prop}[thm]{Proposition}
\newtheorem{cor}[thm]{Corollary}
\theoremstyle{definition}
\title{  \vspace {-2cm}\textbf{Predator-prey dynamics on infinite trees: a branching random walk approach}}
\date{}
\author{Igor Kortchemski\thanks{DMA, École Normale Supérieure, E-mail: igor.kortchemski@normalesup.org}}
\DeclareSymbolFont{extraup}{U}{zavm}{m}{n}
\DeclareMathSymbol{\varheart}{\mathalpha}{extraup}{86}
\DeclareMathSymbol{\vardiamond}{\mathalpha}{extraup}{87}
\renewcommand*{\@fnsymbol}[1]{\ensuremath{\ifcase#1\or  \vardiamond \or \clubsuit\or \spadesuit\or
   \mathsection\or \mathparagraph\or \|\or **\or \dagger\dagger
   \or \ddagger\ddagger \else\@ctrerr\fi}}
\begin{document}

\maketitle

\let\thefootnote\relax\footnotetext{ \\\emph{MSC2010 subject classifications}. 60J80,60J10,60F05. \\
 \emph{Keywords and phrases.} Chase-escape process, Birth-and-assassination process, Coupling, Killed branching random walks.}

\begin{abstract} 
We are interested in predator-prey dynamics on infinite trees, which can informally be seen as particular two-type branching processes where individuals may die (or be infected) only after their parent dies (or is infected). We study two types of such dynamics: the chase-escape process, introduced by Kordzakhia with a variant by Bordenave, and the birth-and-assassination process, introduced by Aldous \& Krebs. We exhibit a coupling between these processes and branching random walks killed at the origin. This sheds new light on the chase-escape  and  birth-and-assassination processes, which allows us to recover by probabilistic means previously known results and also to obtain new results. For instance, we find the asymptotic behavior of tail of the number of infected individuals in both the subcritical and critical regimes for the chase-escape process, and show that the birth-and-assassination process ends almost surely at criticality. 
\end{abstract}

\section{Introduction}

We study the asymptotic behavior of two predator-prey dynamics on infinite trees.  Let us first give an informal description of the chase-escape process on regular trees, which has been introduced by Kordzakhia \cite {Kord05}. Vertices can be of three types: predators, preys or vacant. At fixed rate $ \lambda>0$, preys may only spread to vacant nearest neighbors (when a prey spreads to a vacant neighbor, both vertices are then preys), while at fixed rate $1$, predators may only spread to either vacant or prey nearest neighbors. The evolution starts with one predator at the root and one neighboring prey. Kordzakhia \cite {Kord05} identified the critical value of $ \lambda= \lambda_{c}$ that allows the preys to survive with positive probability. Later, a variant has been considered by Bordenave \cite {Bor08}, where this time predators may only spread to nearest neighbors occupied by preys. In this context, for a certain class of trees, Bordenave \cite{Bor14} extended Kordzakhia's result, estimated the probability of extinction of the preys for $ \lambda> \lambda_{c}$, and studied the moments of the final total number of predators. 

Let us now informally describe the birth-and-assassination process, which was introduced by Aldous \& Krebs \cite {AK90}, and is a system of evolving individuals. During its lifetime, each individual produces offspring at fixed rate $ \mu>0$. In addition, each individual $u$ is equipped with a random timer of $K_{u}$ units of times, where the positive random variables $( \mathcal{K} _{u})$ are all independent identically distributed. The individual's timer is triggered as soon as its parent dies, and an individual dies when its timer reaches $0$. The evolution starts with one individual with its timer triggered. Aldous \& Krebs give the image of a finite collection of clans (which are the connected components of the living individuals in the genealogical trees) where only the heads of clans can be killed. Under certain conditions on $ \mathcal{K} _{u}$, Aldous \& Krebs identified the critical value of $ \mu= \mu_{c}$ that allows the evolution to survive indefinitely with positive probability (the motivation of Aldous \& Krebs was to analyze a scaling limit of a queueing process with blocking, arising in database processing \cite {TPH86}). Later, Bordenave \cite {Bor08,Bor14} studied the birth-and-assassination process when $K_{u}$ is an exponential random variable, and Bordenave proved in particular that the evolution stops at $ \mu= \mu_{c}$ with probability $1$.

In this work, we exhibit a coupling of these two models with  branching random walks killed at the barrier $0$, thus shedding new light on these models and giving an interesting application of the theory of killed branching random walks which do not have independent displacements. In the case of the chase-escape model on supercritical Galton--Watson trees, this enables us to make use of recent powerful results concerning killed branching random walks \cite{AHZ13+} and extend (under slightly more restrictive conditions on the structure of the tree) several results of Bordenave obtained in an analytic way and also to establish new results by probabilistic arguments. In particular, for $ \lambda \leq  \lambda_{c}$, we find the asymptotic behavior of the tail of the final number of predators. Bordenave \cite{Bor14} observed that the chase-escape process exhibits heavy-tail phenomena similar to those appearing in the Brunet-Derrida model of branching random walk killed below a linear barrier. Our coupling shows that this is not a coincidence.  In the case of the birth-and-assassination process, we also show that in general case the evolution stops at $ \mu= \mu_{c}$ with probability $1$, without assuming that $K_{u}$ is an exponential random variable.

\paragraph{The chase-escape process.} Following  Bordenave \cite{Bor14}, we now give a formal definition of the chase-escape process. For convenience of notation, we shall define it as a SIR (Susceptible, Infected, Recovered) dynamics, where preys (resp.~predators and vacant sites) are Infected (resp.~Recovered and Susceptible) sites. As in  \cite{Bor08}, one may also think of Susceptible/Infected/Recovered individuals as normal individuals/individuals propagating a rumor/individuals trying to scotch it.

Let $G=(V,E)$ be a locally finite connected graph.  Set $ \mathfrak {X}= \{ S,I,R\}^{V}$ and for every $v \in V$, let $I_{v}, R_{v}:  \mathfrak {X}\rightarrow  \mathfrak {X}$ be the maps defined by $(I_{v}(x))_{u}=(R_{v}(x))_{u}=x_{u}$ if $u \neq v$ and $(I_{v}(x))_{v}=I$ and $(R_{v}(x))_{v}=R$ with $x=(x_{u})_{u \in V}$. The chase-escape process of infection intensity $ \lambda>0$ is a Markov process taking values in $ \mathfrak {X}$ with transition rates
$$Q(x,I_{v}(x))= \lambda  \cdot \mathbbm {1}_{ \{ x_{v}=S\} }  \cdot  \sum_{ \{ u,v\} \in E } \mathbbm {1}_{ \{ x_{u}=I\} }, \quad Q(x,R_{v}(x))= \mathbbm {1}_{ \{ x_{v}=I\} }  \cdot  \sum_{\{ u,v\} \in E} \mathbbm {1}_{ \{ x_{u}=R\} }  \qquad (v \in V, x \in \mathfrak {X}).$$  
This means that infected vertices spread with rate $ \lambda$ to neighboring susceptible vertices and that recovered vertices spread with rate $1$ to neighboring infected vertices. Note that up to a time change there is no loss in generality in supposing that recovered vertices spread at rate $1$. This dynamics differs from the classical SIR epidemics model, where infected vertices recover at a fixed rate (not depending on their neighborhood). Earlier, Kordzakhia \cite {Kord05} has considered a similar model where recovered vertices may spread with rate $1$ to neighbors who are are either infected or recovered. We say that the chase-escape process gets extinct if there exist a (random) time at which there are no more infected vertices.

If there are no recovered vertices present in the beginning, this dynamics is the so-called Richardson's model \cite {Ric73}. Let us also mention that H\"aggstr\"om \& Pemantle  \cite {HP98} and Kordzakhia  \& Lalley \cite {KL05} have studied an  extension of Richardson's model with two species, in which infected and recovered vertices may only spread to susceptible vertices (in particular, infected and recovered vertices never change states). See also \cite{Bor08} for a directed version of the chase-escape process called the rumor-scotching process, and \cite{Kor13b} for a study of the chase-escape process on large complete graphs.

In this work, we will be interested in the chase-escape process on (rooted) trees. To describe the initial condition which we will use, we introduce the following notation. First, let $ \mathcal{T}$ be a tree. If $A \subset \mathcal{T}$ is a subset of vertices and $x \geq 0$, we denote by $ \textsf{C}( \mathcal{T}, \mathcal{A} ,x)$ the chase-escape process on $ \mathcal{T}$ starting with the vertices of $ \mathcal{A}$ being Infected and all the other vertices being Susceptible, and with a Recovered vertex attached to the root of $ \mathcal{T}$ (denoted by $ \emptyset$) at time $x$. To simplify, we shall say that  $ \textsf{C}( \mathcal{T},A,x)$ is the chase-escape process with initial condition $ \mathcal{A}$ and delay $x$.  In particular, if $ \overline{ \mathcal{T} }$ is the tree obtained from $ \mathcal{T}$ by adding a new parent to the root of $ \mathcal{T}$ and by rooting $\overline{ \mathcal{T} }$ at this new vertex, notice that the chase-escape process $ \textsf{C}( \mathcal{T},  \{ \emptyset\},0)$ may be seen as the chase-escape starting with the root of $ \overline{  \mathcal{T} }$ being Recovered, its child being Infected and all the other vertices being Susceptible. This is the initial condition considered in  \cite{Bor14}.

\paragraph {Coupling the chase-escape process with branching random walks.} We now introduce some background concerning one-dimensional discrete branching random  walks on the real line $ \R$. Let $ \mathcal{T}$ be a tree rooted at $ \emptyset$ and $ x \in \R$. If $u \in \mathcal{T}$, let $ \llbracket \emptyset,u \rrbracket$ denote the vertices belonging to the shortest path connecting $ \emptyset$ to $u$, set $ \rrbracket \emptyset, u \rrbracket = \llbracket \emptyset,u \rrbracket \backslash \{  \emptyset\}$ and let $|u|= \#  \rrbracket \emptyset, u \rrbracket$ be the generation of $u$. Let $ ( \xi_{u}; u \in \mathcal{T})$ be a collection of independent and identically distributed (i.i.d.) random variables. Then set
\begin{equation}
\label{eq:defBRW} V(u) := x+ \sum_{ v \in \rrbracket \emptyset, u \rrbracket } \xi_{v}, \qquad u \in \mathcal{T}.
\end{equation} 
The collection $(V(u); u \in \mathcal{T})$ will be called a branching random walk on $ \mathcal{T}$ with displacement distribution $ \xi_{1}$, starting from $x$. If we view the tree $ \mathcal{T}$ as a genealogical tree and give each individual a displacement by saying that when a vertex $u$ is born, its displacement is obtained by adding $ \xi_{u}$ to the displacement of its parent (with the displacement of the root being $x$), then the displacement of $u$ is exactly $V(u)$.

We are now ready to present the link between the chase-escape process on trees and branching random walks. Here and later, $ \Exp( \lambda)$ denotes an exponential random variable of parameter $ \lambda>0$, independent of all the other mentioned random variables (in particular, different occurrences of $\Exp(1)$ denote different independent random variables).

Fix a tree $ \mathcal{T}$. Let $(R(u); {u \in \mathcal{T} })$ be a branching random walk on $ \mathcal{T}$ with displacement distribution $\Exp(1)$, starting from a random  point distributed according to an (independent) $\Exp(1)$ random variable. Let also $(I(u); {u \in \mathcal{T} })$ be a branching random walk on $ \mathcal{T}$ with displacement distribution $ \Exp (\lambda)$, starting from $0$. Finally, for every $u \in \mathcal{T}$, let $ \overleftarrow{u}$ be the parent of $u$ and set $W(u)= R(\overleftarrow{u})-I(u)$, with the convention $R(\overleftarrow{ \emptyset})=0$.
 
\begin{thm}[Coupling for the chase-escape process] \label {thm:coupl1}  For every $t \geq 0$ and $ u \in \mathcal{T}$,  set:
$$X_{t}(u)= \begin {cases} I & \textrm { if } I(u) \leq  t < R({u}) \textrm { and } W(v) \geq 0 \textrm { for every } v \in \llbracket \emptyset,u\rrbracket, \\
 R & \textrm { if } R({u}) \leq   t \textrm { and } W(v) \geq 0 \textrm { for every } v \in \llbracket \emptyset,u\rrbracket,  \\
S & \textrm {otherwise}.
\end {cases}$$
Then $(X_{t})_{t \geq 0}$ has the same distribution as the  chase-escape process $ \textsf{C}( \mathcal{T},  \{ \emptyset\},0)$.
\end {thm}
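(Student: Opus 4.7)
The plan is to build the chase-escape process $\textsf{C}(\mathcal{T},\{\emptyset\},0)$ from independent exponential clocks attached to the vertices of $\mathcal{T}$, and then to reinterpret those very clocks as the increments of the branching random walks $I$ and $R$. For each $u \in \mathcal{T}$, introduce independent $\xi_{u} \sim \Exp(\lambda)$ and $\zeta_{u} \sim \Exp(1)$, together with an extra $\zeta_{\emptyset} \sim \Exp(1)$ that plays the role of the recovery clock of the virtual recovered parent of the root; all these variables are taken mutually independent. The variable $\xi_{u}$ should be thought of as the delay, starting from the moment $\overleftarrow{u}$ becomes infected, after which $u$ would get infected in turn, while $\zeta_{u}$ is the delay, starting from the moment $\overleftarrow{u}$ becomes recovered, after which $u$ would get recovered (provided $u$ has already been infected).

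Next, I would define infection and recovery times recursively along the tree: set $\tau^I_{\emptyset}:=0$ and $\tau^R_{\emptyset}:=\zeta_{\emptyset}$; for $u \neq \emptyset$ with parent $p:=\overleftarrow{u}$, if $p$ ever becomes infected and $\tau^I_{p}+\xi_{u}<\tau^R_{p}$, set $\tau^I_{u}:=\tau^I_{p}+\xi_{u}$ and $\tau^R_{u}:=\tau^R_{p}+\zeta_{u}$; otherwise $u$ stays susceptible forever. Defining $Y_{t}(u)=I$ on $[\tau^I_{u},\tau^R_{u})$, $Y_{t}(u)=R$ on $[\tau^R_{u},\infty)$, and $Y_{t}(u)=S$ otherwise, the memoryless property of the exponential distribution combined with the standard exponential-race construction of continuous-time Markov jump processes shows that $(Y_{t})_{t \geq 0}$ has generator $Q$ with the correct initial condition, hence is distributed as $\textsf{C}(\mathcal{T},\{\emptyset\},0)$. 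The key point is that on a tree the rate-$1$ clock recovering $u$ only begins to run once $p$ is itself recovered, since $\tau^I_{u}<\tau^R_{p}$ whenever $u$ ever gets infected.

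It remains to identify these clocks with the BRW displacements. Take the increments in the definitions of $I$ and $R$ to be the $\xi_{v}$'s and $\zeta_{v}$'s above: then $I(u)=\sum_{v \in \rrbracket \emptyset,u \rrbracket}\xi_{v}$ and $R(u)=\zeta_{\emptyset}+\sum_{v \in \rrbracket \emptyset,u \rrbracket}\zeta_{v}$ are indeed branching random walks with the required displacement distributions and starting points. A straightforward induction on $|u|$ then shows that $u$ ever becomes infected if and only if $W(v)\geq 0$ for every $v \in \llbracket \emptyset,u \rrbracket$, in which case $\tau^I_{u}=I(u)$ and $\tau^R_{u}=R(u)$: the induction step for $u \neq \emptyset$ uses that $\tau^I_{p}+\xi_{u}<\tau^R_{p}$ reads exactly as $I(u)<R(\overleftarrow{u})$, i.e.\ $W(u)>0$ (and $>$ coincides almost surely with $\geq$ by continuity of the exponential law), while the base case $u=\emptyset$ is automatic since $W(\emptyset)=0$. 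Plugging this equivalence into the description of $(Y_{t})$ yields precisely the three cases defining $X_{t}(u)$ in the theorem.

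The main obstacle I foresee lies in the first step rather than in the combinatorial bookkeeping: one has to justify carefully that the parent-to-child recursive clock construction above produces a genuine version of the full Markov process with generator $Q$. The delicate point is that the rate-$1$ recovery clock of $u$ only matters after the \emph{random} time $\tau^R_{p}$, so one needs the strong memoryless property of the exponential distribution at that time; the analogous issue arises at the (random) infection times along competing branches. This can be handled either by invoking a standard graphical/Poisson representation of this SIR-type dynamics on the locally finite tree $\mathcal{T}$, or by a direct conditioning argument on the filtration generated by the infection and recovery events, but care is needed so that the proof extends to infinite trees where the process may fail to be defined for all times without a further ad hoc restriction.
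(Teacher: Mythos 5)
Your proposal is correct and follows essentially the same route as the paper, which in fact omits a formal proof and only sketches exactly this argument: identify the infection and recovery times along the tree with the branching random walk values $I(u)$ and $R(u)$ via independent $\Exp(\lambda)$ and $\Exp(1)$ clocks, use the memoryless property (and the fact that on a tree infection and recovery can only propagate from the parent) to match the generator $Q$, and observe that $u$ is ever infected iff $W(v)\geq 0$ along $\llbracket \emptyset,u\rrbracket$. Your recursive clock construction and the induction on $|u|$ are precisely the formalization the author says would not be enlightening to write out, and your closing caveat about justifying the graphical construction on an infinite tree is the only point of rigor the paper likewise leaves implicit.
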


Intuitively, if we let $ \emptyset =u_{0},u_{1}, \ldots, u_{|u|}=u$ be the vertices belonging to the shortest path connecting $ \emptyset$ to $u \in \mathcal{T} $, then $u$ becomes infected if and only if, for every $1 \leq i \leq |u|$,  $u_{i}$ has been infected before its parent has recovered, meaning that $ W(v) \geq 0 \textrm { for every } v \in \llbracket \emptyset,u\rrbracket$. In this case, $I(u)$ is the first time when $u$ becomes infected, while $R({u})$ is the first time $u$ recovers. Theorem \ref {thm:coupl1} is then a simple consequence of the memoryless property of the exponential distribution. We omit a formal proof, since it would not be enlightening. This coupling, based on two independent branching random walks, is reminiscent of the Athreya--Karlin coupling \cite {AK68} used to study urn schemes by introducing independent branching processes.

Without further notice, we will assume that the chase-escape process is the process $(X_{t})_{t \geq 0}$ constructed in Theorem \ref {thm:coupl1}.

 \paragraph {Coupling the final state of the chase-escape process with killed branching random walks.} By taking the limit $t \rightarrow \infty$ in Theorem \ref{thm:coupl1}, we get that the total number of infected individuals in the chase-escape process is equal to the number of elements of  $ \{  u \in \mathcal{T};  W(v) \geq 0 \textrm { for every } v \in \llbracket \emptyset,u\rrbracket\}$. Unfortunately $W$ is \emph{not} a branching random walk in the sense defined by \eqref{eq:defBRW}.  However, we still manage to represent the number of infected individuals as the total progeny of a killed branching random walk by allowing branching random walks with non independent increments.
  
 More precisely, consider a single individual located at $x \in \R$. Its children are positioned on $ \R$ according to a certain point process $ \mathcal{L}$ on $ \R$ and represent the first generation. Each one of the individuals of the first generation independently give birth to new individuals positioned (with respect to their birth places) according to an independent point process having the same law as $ \mathcal{L}$, which represent the second generation. This process continues similarly for the next generation.  The genealogy of the individuals form a Galton--Watson tree denoted by $ \mathcal{T}$ and for every $u \in \mathcal{T}$ we let $V(u)$ be the position (or displacement) of $u$. We say that the collection $(V(u); u \in \mathcal{T})$ is a branching random walk with displacement distribution given by the point process $  \mathcal{L}$, starting from $x$.  We also introduce a killing barrier at the origin: we imagine that any individual entering $(- \infty,0)$ is immediately killed and that its descendance is removed. Therefore, at every generation $n \geq 0$, only the individuals with a displacement that has always remained nonnegative up to generation $n$ survive. In particular, $\{  u \in \mathcal{T} ; V(v) \geq 0, \forall v \in \llbracket \emptyset, u \rrbracket \}$ is the set of all the individuals that survive.

\begin{cor}\label {cor:coupl1} Let $V$ be the branching random walk  produced with the point process $$    \sum_{i=1} ^ U \delta_{ \{  \mathcal{E}-\Exp_{i}( \lambda)\} }, $$
starting from $x$, where $U$ is a non-negative integer valued random variable, where $ \mathcal{E}$ is an independent exponential random variable of parameter $1$ and $ (\Exp_{i}( \lambda))_{i \geq 1}$ is an independent sequence of i.i.d.~exponential random variables of parameter $ \lambda$. Denote by $ \mathcal{T}$ the genealogical tree of this branching walk,  set $Z_{n}= \# \{  u \in \mathcal{T}; \ |u|=n \textrm { and }  V(v) \geq 0 \textrm { for every } v \in \llbracket \emptyset,u\rrbracket\}$ and  $Z= \sum_{n \geq 0} Z_{n}$. Then $Z_{n}$ has the same distribution as the number of individuals at generation $n$ in $ \mathcal{T}$ that have been infected until the chase-escape process  $ \textsf{C}( \mathcal{T},  \{ \emptyset\},x)$ has reached its absorbing state, and $Z$ has the same distribution as the total number of individuals  that have been infected in $ \textsf{C}( \mathcal{T},  \{ \emptyset\},x)$.
\end{cor}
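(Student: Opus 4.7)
The strategy is to observe that the walk $W$ constructed in Theorem \ref{thm:coupl1} is, despite its notation, exactly a branching random walk in the point-process sense described in the statement, once one adapts the initial condition from delay $0$ to delay $x$. I would first redo the coupling of Theorem \ref{thm:coupl1} with the convention $R(\overleftarrow{\emptyset}) = x$, which corresponds to the delay-$x$ initial condition (the virtual recovered parent attached to the root is set to spread at time $x$). Nothing else in the construction of $R$, $I$, $W$ changes, and the conclusion of Theorem \ref{thm:coupl1} continues to hold for $\textsf{C}(\mathcal{T}, \{\emptyset\}, x)$.

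Next, for any non-root vertex $v$ with parent $u = \overleftarrow{v}$,
\begin{equation*}
W(v) - W(u) \;=\; \bigl(R(u) - R(\overleftarrow{u})\bigr) - \bigl(I(v) - I(u)\bigr) \;=\; \mathcal{E}_{u} - \Exp_{v}(\lambda),
\end{equation*}
where $\mathcal{E}_u \sim \Exp(1)$ is the increment of $R$ on the edge into $u$ and $\Exp_v(\lambda) \sim \Exp(\lambda)$ is the increment of $I$ on the edge from $u$ to $v$. The key observation is that all children of the same parent $u$ share the same variable $\mathcal{E}_u$ (it is attached to $u$'s incoming edge in the walk $R$), while the $\Exp_v(\lambda)$ are independent across siblings. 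Combined with the independence of edge increments of the two walks $R$ and $I$ across distinct edges and with the Galton--Watson branching property applied to the offspring variable $U$, this gives exactly that, conditionally on $W(u)$, the positions of the children of $u$ form a point process distributed as $W(u) + \sum_{i=1}^{U} \delta_{\mathcal{E} - \Exp_i(\lambda)}$, with subtrees rooted at different children being independent copies of the whole configuration. Since $W(\emptyset) = R(\overleftarrow{\emptyset}) - I(\emptyset) = x$, we may identify $W$ with the branching random walk $V$ of the statement.

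Once this identification is in place, the corollary follows immediately by letting $t \to \infty$ in Theorem \ref{thm:coupl1}: a vertex $u$ is ever infected in $\textsf{C}(\mathcal{T}, \{\emptyset\}, x)$ if and only if $I(u) < R(u)$ on the path from the root, which translates to $V(v) \geq 0$ for every $v \in \llbracket \emptyset, u \rrbracket$, i.e., to $u$ surviving the killing barrier at the origin. Hence $Z_n$ has the law of the number of ever-infected vertices at generation $n$ and $Z = \sum_n Z_n$ that of the total number of ever-infected vertices.

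The only subtle point of the proof is the non-independence of sibling displacements: the shared $\Exp(1)$ variable is indexed by the \emph{parent}, not by the child, so $V$ is \emph{not} a classical branching random walk with i.i.d.\ edge displacements. This is precisely the reason why the statement is phrased in terms of a point process $\sum_{i=1}^{U} \delta_{\mathcal{E} - \Exp_i(\lambda)}$ rather than in the form \eqref{eq:defBRW}, and it is the one place where bookkeeping needs care.
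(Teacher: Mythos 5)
Your proposal is correct and follows essentially the same route as the paper, which justifies the corollary in one line by observing that $V(u)=W(u)$ is the difference between the time the parent of $u$ recovers and the time $u$ gets infected (absent the recovered vertices), then letting $t\to\infty$ in Theorem \ref{thm:coupl1}; you simply spell out the details the paper omits (the delay-$x$ adaptation via $R(\overleftarrow{\emptyset})=x$, and the identification of the sibling displacements $\mathcal{E}_u-\Exp_{u_i}(\lambda)$ with the stated point process, shared parent variable included). The only blemish is the garbled phrase ``$I(u)<R(u)$ on the path,'' which should read $I(v)\leq R(\overleftarrow{v})$, i.e.\ $W(v)\geq 0$, for every $v\in\llbracket\emptyset,u\rrbracket$, but your subsequent translation is the correct condition.
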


This easily follows from Theorem \ref {thm:coupl1} by observing that the displacement $V(u)$ is the difference between the time when the parent of $u$ recovers and the time when $u$ gets infected (without the presence of the recovered vertices). In future work, we hope to use similar couplings to study the chase-escape process on different kind of graphs such as $ \mathbb{Z}^2$.

\paragraph {Chase-escape processes on Galton--Watson trees.} Our results concern the asymptotic behavior of the chase-escape process on supercritical Galton--Watson trees. Let $ \nu$ be a probability measure on $ \mathbb{Z}_{+}= \{ 0,1,2, \ldots\}$. Recall that a Galton--Watson tree with offspring distribution $\nu$ is a random rooted tree starting from one individual by supposing that each individual has an i.i.d.~number of offspring distributed according to $ \nu$ (see e.g.~\cite[Section 1]{LG05} for a formal definition). If $d \geq 2$ is an integer such that $ \nu(d)=1$,  $ \mathcal{T}$ is the infinite deterministic $d$-ary tree.

 We henceforth assume that $ \mathcal{T}$ is a Galton--Watson tree with offspring distribution $ \nu$, and are interested in the properties of the chase-escape process on $ \mathcal{T}$. Let $d= \sum_{i \geq 0}i \nu(i)$ be the expected number of offspring of an individual and suppose that $d  \in (1, \infty)$, which implies that $ \# \mathcal{T}= \infty$ with positive probability. 

If $ d \geq 2$ is an integer, Kordzakhia \cite {Kord05} identified $$\lambda_{c} := 2d-1- 2 \sqrt {d(d-1)}$$ as the critical value of $ \lambda$ by showing that for $ \lambda> \lambda_{c}$, the probability that the chase-escape process $ \textsf{C}( \mathcal{T},  \{ \emptyset\},0)$ gets extinct on the infinite $d$-ary tree is less than one, and that for $ \lambda \in (0, \lambda_{c})$ the latter probability is equal to one. Bordenave \cite[Theorem 1.1]{Bor14} extended this result to more general trees, such as supercritical Galton--Watson trees. By using heavy analytic tools, Bordenave treats the case $ \lambda= \lambda_{c}$ for Galton--Watson trees:  

\begin{prop}[Bordenave, Corollary 1.5 in \cite{Bor14}]\label {prop:crit} For $ \lambda = \lambda_{c}$, for almost every $ \mathcal{T}$,  almost surely, the chase-escape process $ \textsf{C}( \mathcal{T},  \{ \emptyset\},0)$ gets extinct.
\end{prop}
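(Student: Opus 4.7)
By Corollary \ref{cor:coupl1} applied with $x=0$, together with Fubini, it suffices to show that $\E[Z]<\infty$ under the joint law of the Galton--Watson tree and the displacements, where $Z=\sum_{n\geq 0}Z_n$ is the total progeny of the killed branching random walk with step point process $\sum_{i=1}^U\delta_{\mathcal{E}-\Exp_i(\lambda)}$ started from $0$. The plan is then to identify $\lambda=\lambda_c$ as the critical value at which the many-to-one first-moment estimate becomes summable.

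First, I would compute the log-Laplace transform of the step point process: for $-\lambda<\theta<1$,
\[
\psi(\theta)\;:=\;\log\E\!\left[\sum_{i=1}^U e^{\theta(\mathcal{E}-\Exp_i(\lambda))}\right]\;=\;\log\frac{d\,\lambda}{(1-\theta)(\lambda+\theta)},
\]
which is convex and attains its minimum at $\theta^{*}=(1-\lambda)/2$ with value $\psi(\theta^{*})=\log\bigl(4d\lambda/(1+\lambda)^{2}\bigr)$. The equation $\psi(\theta^{*})=0$ rearranges to $\lambda^{2}-(4d-2)\lambda+1=0$, whose smallest root is precisely $\lambda_{c}=2d-1-2\sqrt{d(d-1)}$. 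Since $\lambda_{c}<1$, one has $\theta^{*}=(1-\lambda_{c})/2>0$, so the critical tilt lies strictly inside the open interval $(0,1)$, placing us in the classical critical regime for a killed branching random walk.

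Next, I would apply the many-to-one lemma to obtain $\E[Z_{n}]=d^{\,n}\,\P(S_{k}\geq 0\text{ for all }1\leq k\leq n)$, where $(S_{k})$ is the random walk with i.i.d.\ steps distributed as $\mathcal{E}-\Exp(\lambda)$. Exponentially tilting by $\theta^{*}$ produces a measure $\mathbb{Q}$ under which $(S_{k})$ is centered with smooth step density and finite exponential moments; since $d\cdot\E[e^{\theta^{*}(\mathcal{E}-\Exp(\lambda))}]=e^{\psi(\theta^{*})}=1$ at $\lambda=\lambda_{c}$, an unfolding of the change of measure yields
\[
\E[Z_{n}]\;=\;\mathbb{E}_{\mathbb{Q}}\!\left[e^{-\theta^{*}S_{n}}\,\mathbbm{1}_{\{S_{k}\geq 0,\,\forall\,1\leq k\leq n\}}\right].
\]
Combining the classical ballot-type estimate $\mathbb{Q}(S_{k}\geq 0,\,\forall k\leq n,\,S_{n}\in[a,a+1])\leq C\,a\,n^{-3/2}$ with the exponential weight $e^{-\theta^{*}a}$ (where $\theta^{*}>0$ is crucial) then gives $\E[Z_{n}]=O(n^{-3/2})$, hence $\E[Z]<\infty$ and $Z<\infty$ almost surely.

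The main technical point is the ballot-type bound with the correct uniformity in $a$; this is however classical for a centered random walk whose step has a smooth density and finite exponential moments, both of which are immediate here since the step is the convolution of an $\Exp(1)$ and the negative of an $\Exp(\lambda)$ random variable. A minor caveat is that sibling displacements share the random variable $\mathcal{E}$ and are therefore not independent, but this plays no role in the many-to-one argument, which depends only on the intensity measure $d\cdot\P(\mathcal{E}-\Exp(\lambda)\in\cdot)$ and is insensitive to correlations between siblings.
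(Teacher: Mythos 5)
Your proposal is correct, but it takes a genuinely different route from the paper. The paper's proof is qualitative: since $\psi(\rho_{\star})=\rho_{\star}\psi'(\rho_{\star})$ with $\psi'(\rho_{\star})=0$ at $\lambda=\lambda_{c}$, Biggins' theorem (in the boundary case) gives that the additive martingale $W_{n}=\sum_{|u|=n}e^{\rho_{\star}V(u)}$ converges to $0$ almost surely, whence $\max_{|u|=n}V(u)\to-\infty$ a.s., so only finitely many generations contain surviving particles of the killed walk, and the conclusion follows from Corollary \ref{cor:coupl1}. Your argument is instead a quantitative first-moment computation: many-to-one (which, as you correctly note, only sees the intensity measure $d\cdot\P(\mathcal{E}-\Exp(\lambda)\in\cdot)$ and is insensitive to the correlation between siblings through $\mathcal{E}$), the tilt at $\theta^{*}=(1-\lambda_{c})/2\in(0,1)$ where $\psi$ vanishes together with its derivative, and the ballot-type bound $\mathbb{Q}(S_{k}\geq 0\ \forall k\leq n,\ S_{n}\in[a,a+1])\leq C(1+a)n^{-3/2}$ (you should write $(1+a)$ rather than $a$ to cover $a=0$, but the geometric weight $e^{-\theta^{*}a}$ makes the sum converge either way). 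This yields $\E[Z_{n}]=O(n^{-3/2})$ and hence $\E[Z]<\infty$, which is strictly stronger than the almost-sure finiteness the paper derives and matches the remark made after Theorem \ref{thm:tail} that $\E[Z]<\infty$ at criticality; the price is that you must invoke (or prove) the uniform ballot estimate for the tilted two-sided exponential walk, whereas the paper's route only cites Biggins' lemma and needs no local estimates. Both arguments correctly reduce the quenched statement to an annealed one via Fubini.
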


We give a very short probabilistic proof of Proposition \ref {prop:crit} based on Corollary \ref {cor:coupl1} and on the almost sure convergence towards $0$ of the Biggins' martingale associated with our branching random walk (see Section \ref {sec:brw}).

Our first main result gives the asymptotic behavior of the tail of the number of infected individuals in the chase-escape process.

\begin {thm} \label {thm:tail} Denote by $Z$ the total number of vertices that have been infected until the chase-escape process  $ \textsf{C}( \mathcal{T},  \{ \emptyset\},0)$ has reached its absorbing state. Let $U$ denote a random variable distributed according to $ \nu$. 
\begin{enumerate}
\item[(i)] Suppose that $ \lambda= \lambda_{c}$ (critical case). Assume that there exists $ \alpha>2$ such that $ \Es {U^{ \alpha}}< \infty$. Then
$$ \P( Z >n) \quad  \mathop {\sim}_{n \rightarrow \infty} \quad  \left( 1+ \sqrt{\frac{ d}{d-1}} \right)  \cdot \frac{1}{n ( \ln(n))^{2}}.$$
\item[(ii)] Suppose that  $ \lambda \in (0, \lambda_{c})$ (subcritical case). Assume that there exists $ \alpha> \frac{(1- \lambda+ \sqrt {\lambda^{2}-2 \lambda (2d-1)+1})^{2}}{2(d-1) \lambda}$ such that $ \Es {U^{ \alpha}}< \infty$. There exists a constant $C_{1}>0$, depending only on $ \lambda$ and $d$, such that 
$$\P( Z >n) \quad  \mathop {\sim}_{n \rightarrow \infty} \quad C_{1} \cdot n^{- \frac{(1- \lambda+ \sqrt {\lambda^{2}-2 \lambda (2d-1)+1})^{2}}{4(d-1) \lambda}}.$$
\end{enumerate}
\end {thm}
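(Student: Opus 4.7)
My plan is to exploit Corollary \ref{cor:coupl1}, which realises $Z$ as the total progeny of a branching random walk on $\mathcal{T}$ killed below $0$ whose reproduction point process is $\mathcal{L} = \sum_{i=1}^{U} \delta_{\mathcal{E}-\Exp_{i}(\lambda)}$, starting from $0$. A direct computation gives the Laplace transform
\[
\psi(t) := \Es{\int e^{-tu}\,\mathcal{L}(du)} \;=\; d\,\Es{e^{-t\mathcal{E}}}\,\Es{e^{t\Exp(\lambda)}} \;=\; \frac{d\lambda}{(1+t)(\lambda-t)}, \qquad t \in (-1,\lambda),
\]
which is convex on its domain with infimum $\psi(t^{\star}) = 4d\lambda/(\lambda+1)^{2}$ attained at $t^{\star} = (\lambda-1)/2$. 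Solving $\psi(t^{\star})=1$ recovers exactly $\lambda_{c} = 2d-1-2\sqrt{d(d-1)}$, and for $\lambda \leq \lambda_{c}$ the equation $\psi(t)=1$ has two negative roots $t_{\pm} = \bigl((\lambda-1)\pm\sqrt{\lambda^{2}-2\lambda(2d-1)+1}\bigr)/2$, coalescing at $\lambda = \lambda_c$. Although the atoms of $\mathcal{L}$ share the common variable $\mathcal{E}$, the displacements along any single ray of $\mathcal{T}$ are i.i.d., so the standard many-to-one formula applies with step law $\mathcal{E}-\Exp(\lambda)$.

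For the critical case (i), $\psi$ touches the value $1$ tangentially at $t^{\star}$: this is the boundary regime for killed branching random walks. I would invoke the main theorem of A\"\i d\'ekon--Hu--Zindy \cite{AHZ13+} on the precise tail of the total progeny $Z$ in this regime, which, under the moment hypothesis $\Es{U^{\alpha}}<\infty$ for some $\alpha>2$ (matching the statement) and the exponential-moment conditions on $\mathcal{L}$ (satisfied automatically for our Gamma-type displacements), yields an asymptotic of the form $\P(Z>n) \sim c^{\star}/(n\,\log^{2}n)$, with $c^{\star}$ expressed explicitly in terms of $t^{\star}$, $\psi''(t^{\star})$ and a spine-walk limit constant. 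Evaluating $c^{\star}$ in closed form is then a mechanical computation: substituting $t^{\star} = (\lambda_{c}-1)/2$ and using the identity $(\lambda_{c}+1)^{2} = 4d\lambda_{c}$ to simplify should collapse the expression to the advertised $1+\sqrt{d/(d-1)}$. The one subtle point I foresee is that the atoms of $\mathcal{L}$ are not independent, so one must check that the arguments of \cite{AHZ13+} survive; but those proofs are ultimately driven by many-to-one and many-to-two formulas, both of which reduce, in our setting, to elementary Gamma integrals computed from $\psi$ and from $\Es{\int\!\!\int e^{-t(u+v)}\mathcal{L}(du)\mathcal{L}(dv)}$.

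For the subcritical case (ii), the two roots $t_{-}<t^{\star}<t_{+}<0$ of $\psi=1$ are distinct, and we are in the classical polynomial-tail regime. An exponential tilt by $t_{-}$ turns the spine walk into one with finite positive drift (since $\psi'(t_-)<0$ to the left of the minimum), and a renewal-theoretic argument of the type developed for killed branching random walks (in the spirit of \cite{AHZ13+} and the analysis of \cite{Bor14}) shows that the tail exponent of $Z$ equals the Vieta-ratio $s := t_{-}/t_{+}$. Using $t_{+}t_{-} = \lambda(d-1)$ (product of roots of $\psi(t)=1$), one rewrites $s = t_{-}^{2}/(\lambda(d-1)) = (1-\lambda+\sqrt{\lambda^{2}-2\lambda(2d-1)+1})^{2}/(4(d-1)\lambda)$, which is exactly the exponent in the statement; the hypothesis $\Es{U^{\alpha}}<\infty$ with $\alpha>2s$ is precisely the second-moment / many-to-two condition needed to rule out a heavier competing tail produced by the offspring distribution. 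Existence and positivity of $C_{1}$ then follow from a Tauberian analysis of the fixed-point equation satisfied by $\Es{z^{Z}}$, whose singularity structure near $z=1$ is controlled by $t_{\pm}$. The main technical hurdle here is to upgrade a two-sided bound $\P(Z>n) \asymp n^{-s}$ to a genuine equivalent $\sim C_{1} n^{-s}$; no lattice pathology arises, because $\mathcal{E}-\Exp(\lambda)$ is absolutely continuous with analytic Laplace transform on its domain.
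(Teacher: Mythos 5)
Your part (i) follows the paper's route: the coupling of Corollary \ref{cor:coupl1} turns $Z$ into the total progeny of the killed branching random walk, and the tail is read off from Theorem 1 of \cite{AHZ13+} (Theorem \ref{thm:AHZ} (i) in the paper). Your computation of the critical point and of $\lambda_{c}$ agrees, up to the sign convention $t\mapsto -t$, with the paper's $\psi$, and your worry about the sibling dependence created by the shared variable $\mathcal{E}$ is harmless: the framework of \cite{AHZ13+} is formulated for reproduction point processes, which is exactly how the paper applies it. What you leave as a ``mechanical'' evaluation of the critical constant is carried out in the paper by noting that the $\rho_{\star}$-tilted spine step is a symmetric two-sided exponential, so the overshoot $-S_{\tau_{0}^{-}}$ is $\Exp((1+\lambda_{c})/2)$ by memorylessness, whence $\Q_{0}\left[e^{-\rho_{\star}S_{\tau_{0}^{-}}}\right]=(1+\lambda_{c})/(2\lambda_{c})$ and the constant $\bigl(\Q_{0}\left[e^{-\rho_{\star}S_{\tau_{0}^{-}}}\right]-1\bigr)/(d-1)$ collapses to $1+\sqrt{d/(d-1)}$; note the constant involves this overshoot functional and $R(0)=1$, not $\psi''(t^{\star})$.

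In part (ii) there is a genuine gap. The paper obtains the subcritical equivalent exactly as in (i), by citing Theorem 1 (ii) of \cite{AHZ13+} at $x=0$, which already delivers the precise asymptotics $\P(Z>n)\sim C_{1}\,n^{-\rho_{+}/\rho_{-}}$; there is no ``upgrade from $\asymp$ to $\sim$'' left to perform. You instead derive the exponent from a renewal-theoretic sketch and then propose to produce $C_{1}$ by a Tauberian analysis of the fixed-point equation satisfied by $\Es{z^{Z}}$. That step is not carried out and is precisely the hard part: Bordenave's study of this fixed point yields the critical moment exponent (\cite[Theorem 1.3]{Bor14}) but not a tail equivalent, and a Tauberian argument would need regular-variation control of the generating function near $1$ for which you give no source. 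Indeed, in \cite{AHZ13+} the constant $C_{1}$ arises from the tail of the almost sure limit of the subcritical martingale $\sum_{|u|=n}e^{\rho_{-}V(u)}$ and admits no simple closed form, as the paper remarks. The repair is immediate---apply the same theorem of \cite{AHZ13+} that you invoked in (i), whose offspring-moment hypothesis is exactly your condition $\alpha>2\rho_{+}/\rho_{-}$---but as written the existence of the equivalent with a constant $C_{1}$ is asserted rather than proved.
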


Theorem \ref {thm:tail} is a consequence of analogous results obtained by Aïdékon, Hu \& Zindy \cite[Theorem 1]{AHZ13+} for killed branching random walks. In particular, in the critical case, note that $ \Es {Z}< \infty$ while $ \Es {Z \ln(Z)} = \infty$.
Let us mention that Theorem \ref {thm:tail} (ii) is consistent with a result of Bordenave \cite[Theorem 1.3]{Bor14}, which states that, for every supercritical offspring distribution $ \nu$,
$$ \sup \{ u \geq 1; \Es {Z^{u}}< \infty\}= \min \left( \frac{(1- \lambda+ \sqrt {\lambda^{2}-2 \lambda (2d-1)+1})^{2}}{4(d-1) \lambda},\sup \{ u \geq 1; \Es {U^{u}}< \infty\}  \right).$$

The fact that the tail of number of infected individuals in the chase-escape process $ \textsf{C}( \mathcal{T},  \{ \emptyset\},0)$ is regularly varying allows us to obtain analogous results for more general initial conditions, where we start the chase-escape process with any finite number of infected individuals. To state these results we denote by $ \partial A$ the vertices belonging to $ \mathcal{T} \backslash \mathcal{A}$ and at distance $1$ from $ \mathcal{A}$. Recall also that $|u|$ denotes the generation of a vertex $u \in \mathcal{A}$.

\begin {thm} \label {thm:tail2} Assume that $ \nu(d)=1$ where $d \geq 1$ is an integer, so that  $ \mathcal{T}$ is the infinite $d$-ary tree. Let $ \mathcal{A} \subset \mathcal{T}$ be a finite connected subset of $ \mathcal{T}$ containing $ \emptyset$. Denote by $Z_{ \mathcal{A} }$ the total number of vertices that have been infected until the chase-escape process  $ \textsf{C}( \mathcal{T},  \mathcal{A} ,0)$ has reached its absorbing state. 
\begin{enumerate}
\item[(i)] Suppose that $ \lambda= \lambda_{c}$. Then
$$ \P( Z_{ \mathcal{A} } >n) \quad  \mathop {\sim}_{n \rightarrow \infty} \quad  \left( 1+ \sqrt{\frac{ d}{d-1}} \right)  \cdot \left(   \sum_{u \in \partial A}  \frac{|u|}{d \cdot (d- \sqrt {d(d-1)})^{|u|-1}} \right)  \cdot \frac{1}{n ( \ln(n))^{2}}.$$
\item[(ii)] Suppose that  $ \lambda \in (0, \lambda_{c})$. Then
$$\P( Z_{ \mathcal{A} } >n) \quad  \mathop {\sim}_{n \rightarrow \infty} \quad     C_{1} \cdot  \frac{ \lambda   \sum_{u \in \partial A}  ( (\rho_{-}+ \lambda)^{-|u|}- (\rho_{+}+ \lambda)^{-|u|})  }{ \sqrt {\lambda^{2}-2 \lambda (2d-1)+1}}  \cdot  n^{- \frac{(1- \lambda+ \sqrt {\lambda^{2}-2 \lambda (2d-1)+1})^{2}}{4(d-1) \lambda}},$$
where $ \rho_{-}$ and $ \rho_{+}$ are defined in \eqref{eq:rho}.
\end{enumerate}
\end {thm}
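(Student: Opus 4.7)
The plan is to decompose $Z_\mathcal{A}$ as a sum of contributions coming from the subtrees rooted at each boundary vertex $u \in \partial A$, recognize each contribution as a conditionally independent instance of the total progeny of a killed BRW from Corollary \ref{cor:coupl1} started from a random point, and then sum the resulting regularly varying tails.

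More precisely, using the coupling of Theorem \ref{thm:coupl1} together with the fact that $\mathcal{A}$ is connected and contains $\emptyset$: for each $u \in \partial A$ one has $\overleftarrow{u} \in \mathcal{A}$, hence $I(u) = \Exp_u(\lambda)$ whereas $R(\overleftarrow{u})$ is a sum of $|u|$ i.i.d.\ $\Exp(1)$ variables (so is $\mathrm{Gamma}(|u|,1)$-distributed). The restriction of $(W(w))_w$ to the subtree $\mathcal{T}_u$ rooted at $u$ is itself a branching random walk starting from $W(u) = R(\overleftarrow{u}) - \Exp_u(\lambda)$ with i.i.d.\ displacements of law $\Exp(1) - \Exp(\lambda)$. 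Writing $N_u$ for the number of descendants of $u$ in $\mathcal{T}_u$ that ever get infected (counting $u$ itself if $W(u) \geq 0$), one obtains
\[ Z_\mathcal{A} \;=\; |\mathcal{A}| + \sum_{u \in \partial A} N_u, \]
and the key observation is that, conditionally on $(W(u))_{u \in \partial A}$, the variables $(N_u)_{u \in \partial A}$ are \emph{independent}: the sub-branching-random-walks on the disjoint subtrees $\mathcal{T}_u$ involve independent displacement variables. Moreover, given $W(u) = x$, $N_u$ has the same distribution as the total progeny $Z$ of the killed BRW of Corollary \ref{cor:coupl1} started from $x$.

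Next, I would upgrade Theorem \ref{thm:tail} to an asymptotic that records the starting point: for the critical (resp.\ subcritical) killed BRW, the total progeny started from $x \geq 0$ should satisfy
\[ \mathbb{P}(Z > n \mid \text{start at } x) \quad \mathop{\sim}_{n \to \infty} \quad c_\bullet\, H_\bullet(x)\, r_\bullet(n), \]
with $r_{\mathrm{crit}}(n) = n^{-1}(\ln n)^{-2}$ or $r_{\mathrm{sub}}(n) = n^{-\gamma}$ (where $\gamma$ is the exponent appearing in Theorem \ref{thm:tail}~(ii)), and $H_\bullet$ a harmonic-type function for the killed walk; heuristically $H_{\mathrm{crit}}(x)$ is of the form $(a+bx)e^{t^\star x}$ for some twist $t^\star$, while $H_{\mathrm{sub}}(x)$ is a linear combination of $e^{-\rho_- x}$ and $e^{-\rho_+ x}$ with the rates of \eqref{eq:rho}. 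This refinement should be extracted from a re-reading of the Aïdékon--Hu--Zindy arguments used for Theorem \ref{thm:tail}. Conditioning on $(W(u))$ and using that sums of independent regularly-varying random variables obey the ``one big jump'' principle yields
\[ \mathbb{P}(Z_\mathcal{A} > n) \;\sim\; c_\bullet\, r_\bullet(n) \sum_{u \in \partial A} \mathbb{E}\bigl[H_\bullet(W(u))\, \mathbbm{1}_{W(u) \geq 0}\bigr], \]
the swap of limit and expectation being justified by a uniform-in-$x$ tail bound on $N_u$.

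Finally, the explicit numerical factors announced in (i)--(ii) are produced by computing $\mathbb{E}[H_\bullet(W(u))\, \mathbbm{1}_{W(u) \geq 0}]$. Because $R(\overleftarrow{u})$ is $\mathrm{Gamma}(|u|,1)$-distributed and independent of $\Exp_u(\lambda)$, these are elementary Laplace-transform computations involving $(1+t^\star)^{-|u|}$ in the critical case and $(1+\rho_\pm)^{-|u|}$ in the subcritical case, which reorganize themselves (using $t^\star = d-1-\sqrt{d(d-1)}$ and the quadratic satisfied by $\rho_\pm$) into the announced factors $|u|/\bigl(d(d-\sqrt{d(d-1)})^{|u|-1}\bigr)$ and $\lambda\bigl((\rho_-+\lambda)^{-|u|}-(\rho_++\lambda)^{-|u|}\bigr)/\sqrt{\lambda^2-2\lambda(2d-1)+1}$. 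Consistency with Theorem \ref{thm:tail} in the case $\mathcal{A} = \{\emptyset\}$ (where $\partial A$ consists of $d$ vertices of generation $1$) pins down all normalizations and serves as a sanity check. The main obstacle is the upgrade of Theorem \ref{thm:tail} to a starting-point-dependent statement with a sufficiently explicit $H_\bullet$ and enough uniformity in $x$ to legitimize the dominated-convergence interchange; once this is in hand the remaining computations are mechanical.
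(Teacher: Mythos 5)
Your decomposition is exactly the one the paper uses: condition on the recovery times of the parents of the boundary vertices, observe that the contributions of the disjoint subtrees $\mathcal{T}_u$, $u\in\partial A$, are then independent copies of the killed-BRW total progeny of Corollary \ref{cor:coupl1} started from the random point $R(\overleftarrow{u})-\Exp_u(\lambda)$ (Gamma minus exponential), apply a starting-point-dependent tail asymptotic, add up the regularly varying tails, and finish with Gamma/exponential Laplace computations; your sanity check for $\mathcal{A}=\{\emptyset\}$ is also the right one. One part you treat as needing work is in fact already available: no ``upgrade'' of Theorem \ref{thm:tail} has to be extracted, since the A\"{\i}d\'ekon--Hu--Zindy result (Theorem \ref{thm:AHZ} in the paper) is stated for an arbitrary starting point $x\geq 0$, with tail constant $R(x)e^{\rho x}$ where $R$ is the renewal function of the associated one-dimensional walk; because the step distribution here is two-sided exponential, $R$ is explicit ($R(x)=1+(1+\lambda_c)x/2$ at criticality, and a constant plus a multiple of $e^{-(2\rho_++\lambda-1)x}$ in the subcritical case), which is what makes the announced constants computable. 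Note also two sign slips in your heuristic for $H_\bullet$: the correct tilt is $e^{+\rho_\star x}$ with $\rho_\star=1+\sqrt{d(d-1)}-d>0$ (not $d-1-\sqrt{d(d-1)}$, which is $-\rho_\star$), and in the subcritical case $R(x)e^{\rho_+x}$ is a combination of $e^{\rho_+x}$ and $e^{\rho_-x}$, not of $e^{-\rho_\pm x}$.

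The genuine gap is precisely the step you defer at the end: the uniformity in $x$ needed to interchange limit and expectation. Since $W(u)=R(\overleftarrow{u})-\Exp_u(\lambda)$ is unbounded, one needs a bound of the form $\P_x(Z>n)\leq C(1+x)e^{\rho_\star x}\,n^{-1}(\ln n)^{-2}$ in the critical case, respectively $\P_x(Z>n)\leq C e^{\rho_+x}\,n^{-\rho_+/\rho_-}$ in the subcritical case, valid simultaneously for all $x\geq0$ and all $n$, and integrable against the Gamma--exponential law of $W(u)$ (which it is, since $\rho_\star,\rho_+<1$). This is exactly Proposition \ref{prop:tails}, which the paper identifies as the main difficulty and proves in Section \ref{sec:appendix} by going inside the AHZ machinery: reduce to $\#\mathcal{L}[0]$, split the surviving population at level $L_n=\ln n+\ln\ln n$ (resp.\ $L_n=\ln(n)/\rho_-$) into the particles absorbed at $L_n$ and the good/bad particles, and use the first/second moment bounds of AHZ (their Lemmas 13, 14, 19, 20) together with a many-to-one estimate for the absorbed particles. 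Without this uniform estimate (or an equivalent one) the dominated-convergence interchange, and hence the identification of the constants, is not justified; with it, the rest of your outline coincides with the paper's argument.
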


Theorem \ref {thm:tail2} also  relies on results of \cite{AHZ13+}, but its proof uses estimates for killed branching random walks starting from a random point. In particular, the main difficulty to obtain Theorem \ref {thm:tail2} is to establish uniform estimates in the starting point (see Proposition \ref {prop:tails}, which may be of independent interest). Many constants are explicit in Theorem \ref {thm:tail2}: This stems from the remarkable fact that the renewal function of the random walk associated with the branching random walk $V$ may be calculated explictly.

Observe finally that if $0 \in \mathcal{A}$ and $ \mathcal{A}$ is not a connected subset of $ \mathcal{T}$, then $ \Pr {Z_{ \mathcal{A}}= \infty}>0$. Indeed, if $u \in \mathcal{A}$ is such that $ \overleftarrow{u} \not \in \mathcal{A}$, then  $\overleftarrow{u}$ has a positive probability of never becoming infected and thus of never recovering, so that all the descendants of $u$ will be infected with positive probability.

\paragraph{Birth-and-assassination process.} We now turn our attention to the birth-and-assassination process, which we first formally define following Aldous \& Krebs \cite {AK90}. Let $\N$ be the set of all the positive integers and  let $ \mathcal{U}$ be the set of all labels defined by $\mathcal{U} = \bigcup_{n=0}^{\infty}(\N)^n$, 
where by convention $(\N)^0 = \{\emptyset\}$. An element of $\mathcal{U}$ is a
sequence $u=u_1\cdots u_k$ of positive integers and we set
$|u| = k$, which represents the  generation  of $u$. If $u = u_1
\cdots u_i$ and $v = v_1 \cdots v_j$ belong to $\mathcal{U}$, we write $uv= u _1
\cdots u_i v_1 \cdots v_j$ for the concatenation of $u$ and $v$. In
particular, we have $u \emptyset = \emptyset u = u$. Let $(\mathcal{P}_{u} ; u \in \mathcal{U})$ be a family of i.i.d.~Poisson processes on $ \R_{+}$ with common arrival rate $ \lambda>0$, and let $ ( \mathcal{K}_{u} ; u \in \mathcal{U})$ be an independent collection of i.i.d.~strictly positive random variables.  

The process starts at time $0$ with one individual, with label $ \emptyset$, producing offspring at arrival times of $ \mathcal{P}_{ \emptyset}$, which enter the system with labels $1,2, \ldots$  according to their birth order. Each new individuel $u$ entering the system immediately begins to produce offspring at the arrival times of $ \mathcal{P}_{u}$, which enter the system with labels $n1,n2, \ldots$ according to their birth order. In addition, the ancestor $ \emptyset$ is \emph{at risk} at time $0$, and it only produces offspring until time $T_{ \emptyset} := \mathcal{K} _{ \emptyset}$, when it is removed from the system. In addition, if an individual $u$ is removed from the system at time $T_{u}$, then, for  every $k \geq 0$, such that the individual $uk$ has been born,  $uk$ becomes at risk, and continues to produce offspring until time $T_{uk}:=T_{u}+ \mathcal{K}_{uk} $ when it is removed from the system.

The birth-and-assassination process can be equivalently  seen as a Markov process on $ \{ S,I,R\}^{\mathcal{U}}$, where an individual at state R (resp.~I and S) is a removed (resp.~alive and not yet born) individual.

We will couple the birth-and-assassination process with a branching random walk where individuals have infinitely many offspring. Let $(D(u) ; {u \in \mathcal{U} })$ be the branching random walk on $ \mathcal{U}$ with displacement distribution given by the point process formed by a collection of i.i.d.~variables having the same distribution $K_{ \emptyset}$, starting from an independent  starting point distributed according to $K_{ \emptyset}$. Let also $(B(u); {u \in \mathcal{U} })$ be the branching random walk on $ \mathcal{U}$ with displacement distribution given by the point process $$   
 \delta_{ \{ \Exp_{1}( \lambda)\} }+    \delta_{ \{ \Exp_{1}( \lambda)+\Exp_{2}( \lambda)\} } +  \delta_{ \{ \Exp_{1}( \lambda)+\Exp_{2}( \lambda)+\Exp_{3}( \lambda)\}} +\cdots, $$
where $(\Exp_{i}( \lambda))_{i \geq 1}$ is an i.i.d.~collection of exponential random variables of parameter $ \lambda$, with starting point $0$. Finally set $V'(u)= D(\overleftarrow{u})-B(u)$ for $u \in \mathcal{T}$, with the convention $D(\overleftarrow{\emptyset})=0$.

\begin{thm}[Coupling for the birth-and-assassination process]\label {thm:coupl2}For every $t \geq 0$ and $ u \in \mathcal{U}$, we set:
$$Y_{t}(u)= \begin {cases} I & \textrm { if } B(u) \leq  t < D({u}) \textrm { and } V'(v) \geq 0 \textrm { for every } v \in \llbracket \emptyset,u\rrbracket \\
 R & \textrm { if } D({u}) \leq   t \textrm { and } V'(v) \geq 0 \textrm { for every } v \in \llbracket \emptyset,u\rrbracket \\
 S & \textrm { otherwise.}
 \end {cases}$$
Then $(Y_{t})_{t \geq 0}$ has the same distribution as the birth-and-assassination process.
\end {thm}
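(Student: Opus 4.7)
The plan is to read off from the two independent branching random walks $D$ and $B$ the building blocks $(\mathcal{K}_u, \mathcal{P}_u)_{u \in \mathcal{U}}$ used in the formal definition of the birth-and-assassination process, and then to check inductively that the condition ``$V'(v) \geq 0$ for every $v \in \llbracket \emptyset, u\rrbracket$'' is equivalent to ``$u$ is ever born'', and that when this condition holds, $B(u)$ and $D(u)$ play the roles of the birth and death times of $u$. As with Theorem~\ref{thm:coupl1}, the proof is essentially a definitional unpacking; the only content lies in setting up the correspondence cleanly.

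First, I would extract the ingredients. Since the displacement law of $D$ is that of a countable i.i.d.~family of copies of $\mathcal{K}_\emptyset$, one may write $D(u) = D(\overleftarrow u) + \xi_u$ with the $(\xi_u)_{u \neq \emptyset}$ i.i.d.~of law $\mathcal{K}_\emptyset$ and independent of the starting point $D(\emptyset)$. Setting $\mathcal{K}_\emptyset := D(\emptyset)$ and $\mathcal{K}_u := \xi_u$ for $u \neq \emptyset$ gives an i.i.d.~family and yields $D(u) = \sum_{v \in \llbracket \emptyset, u\rrbracket} \mathcal{K}_v$, so that $T_u = D(u)$ for the recursion $T_\emptyset = \mathcal{K}_\emptyset$, $T_u = T_{\overleftarrow u} + \mathcal{K}_u$ defining the death times. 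Similarly, the point process used for the displacements of $B$ has atoms at the successive partial sums of an i.i.d.~$\Exp(\lambda)$ sequence, which is by definition the sequence of arrival times of a rate-$\lambda$ Poisson process; at each $u \in \mathcal{U}$ the offsets $B(uk) - B(u)$, $k \geq 1$, therefore form the arrival times of a Poisson process $\mathcal{P}_u$ of rate $\lambda$. Since $B$ and $D$ are independent and different vertices use independent displacement processes, $(\mathcal{K}_u, \mathcal{P}_u)_{u \in \mathcal{U}}$ has exactly the joint law prescribed in the Aldous--Krebs definition.

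Second, I would prove by induction on $|u|$ that, in the birth-and-assassination process built from this family, the individual $u$ is born if and only if $V'(v) \geq 0$ for every $v \in \llbracket \emptyset, u \rrbracket$, and that in this case its birth time equals $B(u)$. The base case $u = \emptyset$ holds trivially since $B(\emptyset) = 0 = D(\overleftarrow{\emptyset})$. For the inductive step, write $u = wk$ with $w = \overleftarrow u$: the inductive hypothesis says $w$ is in the process with birth time $B(w)$ iff $V'(v) \geq 0$ for all $v \in \llbracket \emptyset, w\rrbracket$, and then $u$ is born iff the $k$-th arrival of $\mathcal{P}_w$, which equals $B(u) - B(w)$ by construction, occurs before $T_w - B(w) = D(w) - B(w)$, that is iff $B(u) \leq D(w)$, which is exactly $V'(u) \geq 0$. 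When this holds, $u$'s birth and removal times are $B(u)$ and $T_u = D(u)$, so reading off the three cases in the definition of $Y_t(u)$ gives $Y_t(u) = I$ on $[B(u), D(u))$, $Y_t(u) = R$ on $[D(u), \infty)$, and $Y_t(u) = S$ otherwise, matching the birth-and-assassination process.

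The only point requiring a moment's care, which I expect to be the main obstacle in the bookkeeping, is that $B(u)$ and $D(u)$ are defined unconditionally on all of $\mathcal{U}$, as if no individual were ever removed, whereas the actual birth-and-assassination process only involves individuals whose entire ancestral line survives the race between births and deaths. The induction above is precisely what makes this distinction rigorous, by carrying along the statement that, on the event where $u$ actually appears, its true birth and death times coincide with $B(u)$ and $D(u)$ computed from the ``unkilled'' walks.
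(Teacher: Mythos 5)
Your proposal is correct and is essentially the argument the paper has in mind: the paper explicitly treats Theorem~\ref{thm:coupl2} as ``a simple, yet useful, reformulation of the definition'' and omits details, and your write-up is precisely that unpacking --- reading off the i.i.d.~family $(\mathcal{K}_u,\mathcal{P}_u)$ from the increments of $D$ and $B$, then inducting on the generation to identify ``$V'(v)\geq 0$ along $\llbracket\emptyset,u\rrbracket$'' with the event that $u$ is born, with birth and death times $B(u)$ and $D(u)$. No gaps; the point you flag about the walks being defined ``unkilled'' on all of $\mathcal{U}$ is indeed the only bookkeeping issue, and your induction handles it.
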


This is a just a simple, yet useful, reformulation of the definition of the birth-and-assassination process. Without further notice, we will assume that the birth-and-assassination process is the process $(Y_{t})_{t \geq 0}$ appearing in Theorem \ref {thm:coupl2}.  
In particular, as for the chase-escape process, by letting $t \rightarrow \infty$, we see that the total progeny of this process is related to the total progeny of a certain killed branching walk:

\begin {cor} \label {cor:BA}There exists a (random) time such that that no individuals remain in the birth-and-assassination process if and only if there is a finite number of individuals in  the branching random walk on $ \mathcal{U}$, produced with the point process $$  \delta_{ \{ \mathcal{K}_{ \emptyset}-\Exp_{1}( \lambda)\} }+    \delta_{ \{ \mathcal{K}_{\emptyset}-(\Exp_{1}( \lambda)+\Exp_{2}( \lambda))\} } +  \delta_{ \{ \mathcal{K} _{\emptyset}-(\Exp_{1}( \lambda)+\Exp_{2}( \lambda)+\Exp_{3}( \lambda))\}} +\cdots, $$
starting from $0$ and killed at $0$.
\end {cor}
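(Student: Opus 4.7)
The plan is to proceed analogously to Corollary \ref{cor:coupl1}: apply the coupling of Theorem \ref{thm:coupl2} and let $t \to \infty$.

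First I would verify that $V' = (V'(u))_{u \in \mathcal{U}}$ is a branching random walk with the displacement point process stated in the corollary, started from $0$. By the convention $D(\overleftarrow{\emptyset}) = 0$ and since $B(\emptyset) = 0$, we have $V'(\emptyset) = 0$. For any vertex $v$ and any child $vk$,
$$V'(vk) - V'(v) = (D(v) - D(\overleftarrow{v})) - (B(vk) - B(v)),$$
where $D(v) - D(\overleftarrow{v})$ has law $\mathcal{K}_\emptyset$ (the root case $v = \emptyset$ being handled identically, with $D(\emptyset) \sim \mathcal{K}_\emptyset$ playing this role), where $B(vk) - B(v)$ is the $k$-th partial sum of an i.i.d.~$\Exp(\lambda)$ sequence, and where all of these variables are independent across $v$. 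Hence the children of $v$ in $V'$ form a point process with the prescribed law, independently across $v$.

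Set $A := \{u \in \mathcal{U} : V'(w) \geq 0 \text{ for every } w \in \llbracket \emptyset, u \rrbracket\}$; by construction this is exactly the set of individuals of $V'$ that survive the killing at $0$, and the total population of the killed branching random walk equals $\#A$. By Theorem \ref{thm:coupl2}, the set of alive individuals at time $t$ in the birth-and-assassination process is $\{u \in A : B(u) \leq t < D(u)\}$, so the process ends at some finite random time if and only if $\sup_{u \in A} D(u) < \infty$. If $A$ is finite, this supremum is finite almost surely, since each $D(u)$ is.

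For the converse, assume $A$ is infinite; the goal is to derive $\sup_{u \in A} D(u) = \infty$. The key observation is that every $v \in A$ has only finitely many children in $A$: indeed, a child $vk$ lies in $A$ iff $B(vk) \leq D(v)$, and since $B(vk) - B(v)$ is the $k$-th partial sum of a fixed i.i.d.~$\Exp(\lambda)$ sequence, the number of such $k$ equals the number of arrivals of a rate-$\lambda$ Poisson process in the bounded interval $[B(v), D(v)]$, which is almost surely finite. Thus $A$ is a locally finite infinite subtree of $\mathcal{U}$, and K\"onig's lemma produces an infinite ray $\emptyset = u_0, u_1, u_2, \ldots$ inside $A$. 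Along this ray, $D(u_n)$ is a sum of $n+1$ i.i.d.~strictly positive random variables with law $\mathcal{K}_\emptyset$, so $D(u_n) \to \infty$ almost surely, yielding $\sup_{u \in A} D(u) = \infty$. I expect this last step, and specifically the verification that every $v \in A$ has finitely many children in $A$, to be the only real obstacle, since it is the point at which the potentially infinite branching of $\mathcal{U}$ is reduced to a locally finite structure on which K\"onig's lemma applies.
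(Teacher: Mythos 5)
Your proof is correct and takes essentially the same route as the paper, which treats the corollary as an immediate consequence of Theorem \ref{thm:coupl2} by letting $t\to\infty$; your verification of the displacement law of $V'$ and the local-finiteness/K\"onig's-lemma step simply supply details the paper leaves implicit. (The one micro-step you assert without proof --- that $\sup_{u\in A}D(u)=\infty$ forces some individual to be alive at every time $t$ --- follows since $vk\in A$ implies $B(vk)\leq D(v)$, so the intervals $[B(w),D(w))$ along the ancestral line of any $u\in A$ cover $[0,D(u))$.)
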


Following Aldous \& Krebs, we say the process is \emph{stable}  if almost surely there exists a (random) time when no individuals remain in the system, and \emph{unstable}  otherwise. Let $ \phi(u)= \Es {e^{u K_{ \emptyset}}}$ be the moment generating function of $K_{ \emptyset}$ for $u \in \R$. Under the assumption that $ \phi$ is finite on a neighborhood of the origin, Aldous \& Krebs \cite {AK90} proved that if $ \min_{u>0} \lambda u^{-1} \phi(u)<1$ then the process is stable, and if $ \min_{u>0} \lambda u^{-1} \phi(u)>1$ then the process is unstable. Later, in the particular case where $ K_{ \emptyset}$ is an exponential random variable of parameter $1$, Bordenave \cite[Corollary 2]{Bor08} proved that the process is stable if  $\min_{u>0} \lambda u^{-1} \phi(u)=1$ (which corresponds to the case $ \la=1/4$). Our last contribution is to establish that this fact holds more generally.
 
\begin{thm}\label {thm:BA}Assume that $\phi$ is finite on a neighborhood of the origin and that $ \min_{u>0} \lambda u^{-1} \phi(u)=1$. Then the birth-and-assassination process is stable.\end{thm}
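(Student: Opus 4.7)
The plan is to reduce the question to an extinction problem for a killed branching random walk via Corollary~\ref{cor:BA}, and then invoke the classical fact that the additive Biggins martingale vanishes almost surely at its boundary point. By Corollary~\ref{cor:BA}, stability of the birth-and-assassination process is equivalent to almost-sure finiteness of the total progeny of the killed branching random walk on $\mathcal{U}$ with displacement point process
$$\mathcal{L} \;=\; \sum_{k \geq 1} \delta_{\mathcal{K}_{\emptyset} - (\Exp_{1}(\la) + \cdots + \Exp_{k}(\la))},$$
killed upon entering $(-\infty,0)$; it therefore suffices to establish extinction of this killed BRW almost surely.

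The first step identifies the criticality in branching random walk terms. For $\theta < 0$ with $-\theta$ in the domain of $\phi$, independence of $\mathcal{K}_{\emptyset}$ from the exponentials and the geometric identity $\sum_{k \geq 1} (\la/(\la - \theta))^{k} = \la/(-\theta)$ give
$$m(\theta) \;:=\; \Es{\sum_{x \in \mathcal{L}} e^{-\theta x}} \;=\; \phi(-\theta) \cdot \frac{\la}{-\theta}.$$
Hence the hypothesis $\min_{u>0} \la u^{-1} \phi(u) = 1$ is precisely $\min_{\theta < 0} m(\theta) = 1$. The assumption that $\phi$ is finite on a neighborhood of the origin ensures that this minimum is attained at an interior point $\theta_{*} < 0$ where $m$ is smooth, and there $\kappa(\theta_{*}) := \log m(\theta_{*}) = 0$ and $\kappa'(\theta_{*}) = 0$.

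Next I apply Biggins' additive-martingale theory. Let $V(u)$ denote the position of $u$ in the (unkilled) BRW and set $W_{n} := \sum_{|u|=n} e^{-\theta_{*} V(u)}$. Since $m(\theta_{*}) = 1$, $(W_{n})$ is a non-negative martingale of mean~$1$, so it converges almost surely to some $W_{\infty} \geq 0$. The identity $\kappa(\theta_{*}) = \theta_{*} \kappa'(\theta_{*}) = 0$ places $\theta_{*}$ at the boundary of the interval of parameters for which the additive martingale is uniformly integrable; by the classical boundary-case statement (provable via the spine decomposition \`a la Lyons), one concludes $W_{\infty} = 0$ almost surely.

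To finish, let $Z_{n}$ denote the number of surviving particles at generation $n$ in the killed BRW. Any such particle $u$ has $V(u) \geq 0$, and since $\theta_{*} < 0$ this yields $e^{-\theta_{*} V(u)} \geq 1$; summing over the $Z_n$ surviving particles and bounding by the full sum, $Z_{n} \leq W_{n}$. Therefore $Z_{n} \to 0$ almost surely, and being integer-valued $Z_{n}$ vanishes eventually, so the killed BRW goes extinct almost surely, and Corollary~\ref{cor:BA} yields the stability of the birth-and-assassination process. The main obstacle is verifying that the boundary-case vanishing of Biggins' additive martingale really applies to our specific BRW, whose displacement point process $\mathcal{L}$ has infinite intensity on $(-\infty, 0)$; this is where the hypothesis on $\phi$ plays its crucial role, guaranteeing that $m$ is smooth and finite in a neighborhood of $\theta_{*}$ and that the tilted step distribution used in the spine argument has finite variance.
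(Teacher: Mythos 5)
Your proposal is correct and takes essentially the same route as the paper: reduce via Corollary \ref{cor:BA} to a.s.\ extinction of the killed branching random walk, recognize $\min_{u>0}\lambda u^{-1}\phi(u)=1$ as the boundary case where the log-Laplace transform $\widetilde\psi$ vanishes together with its derivative at the minimizer $u_{\star}$ (your $-\theta_{*}$), and invoke the Biggins--Lyons degenerate-limit theorem for the additive martingale $W_{n}=\sum_{|u|=n}e^{u_{\star}\widetilde V(u)}$. The only (cosmetic) difference is the final step: you bound the number of surviving particles by $Z_{n}\leq W_{n}\to 0$, whereas the paper deduces $\max_{|u|=n}\widetilde V(u)\to-\infty$; the two are equivalent.
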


The proof is very similar to the one we give to Proposition \ref {prop:crit} and is also based on the almost sure convergence towards $0$ of the Biggins' martingale associated with our branching random walk.

\paragraph {Structure of the paper.} We prove our results concerning the chase-escape process in Section \ref {sec:CE}, and the results concerning the birth-and-assassination process in Section \ref {sec:BA}. Finally, technical uniform estimates on branching random walks are established in Section \ref {sec:appendix}.

\paragraph {Acknowledgments.} I am deeply indebted to Itai Benjamini for suggesting me to study this problem as well as to the Weizmann Institute of Science for hospitality, where this work begun. I would also like to thank Bastien Mallein and Elie Aïdékon for stimulating discussions. I am also grateful to Elie Aïdékon for the proof of Proposition \ref {prop:tails}.

\section{The chase-escape process}
\label {sec:CE}

In this section, we study the chase-escape process and prove in particular Proposition \ref {prop:crit} and Theorems \ref {thm:tail} and \ref {thm:tail2} by using branching random walks.

\subsection{Branching random walks}
 \label {sec:brw}
We start by introducing some relevant quantities of the branching random walk involved in the coupling with the chase-escape process.  For $x \geq 0$, let $ \P_{x}$ be a probability measure such that under $ \mathbb {P}_{x}$, $(V(u); u \in \mathcal{T})$  is the law of a branching random walk   produced with the point process $$ \mathcal{L}=    \sum_{i=1} ^ U \delta_{ \{  \mathcal{E}-\Exp_{i}( \lambda)\} }, $$
starting from $x$, where $U$ is a non-negative integer valued random variable distributed according to $ \nu$. We denote by  $ \mathbb {E}_{x}$ the corresponding expectation. Recall  that $d>1$ is the mean value of $ \nu$. A straightforward  computation yields the logarithmic generation function for the branching random walk:
  \begin{eqnarray*}
  \psi(t) &:=& \ln \mathbb {E}_{0} \left [ \sum_{|u|=1} e^{t V(u)} \right]=  \ln \left(  \sum_{k \geq 0}  \mathbb {E}_{0} \left [ \mathbbm {1}_{ \{ U=k\} } \sum_{i=1} ^{k} e^{t ( \mathcal{E}- \Exp_{i}( \lambda)) } \right] \right) \\
  &=&  \ln \left(  \sum_{k \geq 0}   \Pr {U=k} \mathbb {E}_{0} \left [ \sum_{i=1} ^{k} e^{t ( \mathcal{E}- \Exp_{i}( \lambda)) } \right] \right) =  \ln \left(  \sum_{k \geq 0}   \Pr {U=k}  k  \frac{ \lambda}{(1-t)( \lambda+t)}  \right) \\
  &=& \ln(d)+ \ln \left( \frac{1}{1-t} \cdot \frac{ \lambda}{ \lambda+t} \right), \qquad \qquad  t \in (- \lambda,1),
  \end{eqnarray*}
   In addition, there exists $ \rho_{ \star} \in (0,1)$ such that $ \psi( \rho_{ \star})= \rho_{ \star} \psi'( \rho_{ \star})$. Recalling that $\lambda_{c}= 2d-1- 2 \sqrt {d(d-1)}$, notice that
  $$ \psi'( \rho_{ \star}) \quad\begin {cases} \quad  <0 & \textrm { if } \lambda \in (0, \lambda_{c}) \\
  \quad =0 & \textrm { if }  \lambda= \lambda_{c}\\
  \quad >0 & \textrm { if } \lambda> \lambda_{c}. \end {cases}
  $$
 In the terminology of branching random walks, this means that the branching random walk $V$ is critical for $ \lambda= \lambda_{c}$, subcritical for $ \lambda \in (0, \lambda_{c})$ and supercritical for $ \lambda> \lambda_{c}$. For $ \lambda= \lambda_{c}$, note that $ \rho_{ \star}= 1+ \sqrt {d(d-1)}-d=(1- \lambda_{c})/2$.
 
 We are now ready to give an effortless proof of Proposition \ref {prop:crit}.
 
 \begin {proof}[Proof of Proposition \ref {prop:crit}.] Assume that $ \lambda= \lambda_{c}$, so that
 $ \psi'( \rho_{ \star})=0$. By  \cite [Lemma 5]{Bigg77} (see also \cite{Rus87} for a probabilistic proof), under $ \P_{0}$, the martingale $W_{n}:= \sum_{|u|=n}e^{  \rho_{ \star} V(u)}$ converges almost surely towards $0$ as $ n \rightarrow \infty$. This implies that, under $ \P_{0}$,
$$ \max_{|u|=n} V(u) \to - \infty \qquad  a.s.$$
(See \cite[Theorem 1.2]{HS09} for a more precise rate of convergence under additional assumptions on the moments of $ \nu$.)
This implies that the number of surviving individuals in the branching random walk $V$ killed at the barrier $0$  is almost surely finite. Proposition \ref {prop:crit}  then simply follows from Corollary \ref {cor:coupl1}.
 \end {proof}
 
 We finally introduce some notation in the subcritical case. When $ \lambda < \lambda_{c}$, we let $ \rho_{-}, \rho_{+}$ be such that $0 < \rho_{-}< \rho_{\star}< \rho_{+}<1$ and $ \psi( \rho_{-})= \psi( \rho_{+})=0$. Setting $ \Delta= \lambda^{2}-2 \lambda (2d-1)+1$, one checks that
  \begin{equation}
  \label{eq:rho} \rho_{-}= \frac{1}{2} \cdot (1- \lambda- \sqrt {\Delta}), \qquad \rho_{+}=\frac{1}{2} \cdot (1- \lambda + \sqrt {\Delta}).
  \end{equation}

\subsection{Total progeny of a killed branching random walk}

In this section, we assume that $ \lambda \leq \lambda_{c}$ and shall prove Theorem  \ref {thm:tail}. To this end, we begin by presenting results concerning the total progeny of branching random walks killed at the barrier $0$ starting from a fixed initial point. We first need to introduce an auxiliary random walk $(S_{n})_{n \geq 0}$ defined as follows. Set $ \rho= \rho_{ \star}$ if $ \lambda= \lambda_{c}$ (critical case) and $ \rho= \rho_{+}$ if $  \lambda \in (0, \lambda_{c})$ (subcritical case). Then, for every $x \geq 0$, let $(S_{n})_{n \geq 0}$ be a random walk such that, under the probability measure $ \mathbb {Q}_{x}$, we have $S_{0}=x$ and such that its step distribution is characterized by the fact that
$$ \EsQx {f(S_1-S_{0})}= \E_0 \left [ \sum_{|u|=1} f(V(u)) e^{ \rho V(u)} \right] \qquad  \textrm { for every measurable function } f: \R \rightarrow \R_{+}.$$
Next, let $R$ be the renewal function associated with $(S_{n})_{n \geq 0}$ defined by
$$R(x)= \mathbb{Q}_{0} \left[ \sum_{j=0}^{ \tau^{*}-1} \mathbbm {1}_{ \{ S_{j} \geq -x\} } \right], \qquad \textrm {where } \tau^{*}= \inf \{ j \geq 1; \ S_{j} \geq 0\},$$
and set $ \tau_{0}^{-}= \inf \{ k \geq 0 ; S_{k}<0\}$. 

We are now able to state a result obtained by Aïdékon, Hu \& Zindy \cite{AHZ13+}, which describes the asymptotic 
 behavior of the total progeny $Z$ of  the branching random walk killed at the barrier $0$ starting from a fixed initial point which is not necessarily the origin (this will be needed for the proof of Theorem \ref {thm:tail2}).
 
\begin{thm}[Theorem 1 in \cite{AHZ13+}] ~\label {thm:AHZ}
\begin{enumerate}
\item[(i)] Suppose that $ \lambda= \lambda_{c}$. Then for every fixed $ x \geq 0$ ,
$$ \P_{x} ( Z >n)  \quad\mathop{ \sim}_{n \rightarrow \infty} \quad  \frac{ \mathbb{Q}_{0}\left[e^{- \rho_{ \star} S_{ \tau_{0}^{-}}} \right] -1}{ d-1}  \cdot R(x)   e^{ \rho_{ \star}x} \cdot \frac{1}{n ( \ln(n))^{2}}.$$
\item[(ii)] Suppose that $ \lambda \in (0,\lambda_{c})$. There exists a constant $C_{1}>0$, depending only on $d$ and $ \lambda$, such that for every $x \geq 0$ and $n \geq 1$,
$$ \P_{x}(Z>n)  \quad\mathop{ \sim}_{n \rightarrow \infty} \quad C_{1} \cdot  R(x)   e^{ \rho_{+} x} \cdot n^{- \frac{ \rho_{+}}{ \rho_{-}}}.$$
\end{enumerate}
\end{thm}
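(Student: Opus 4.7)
The plan is to follow the now-standard spine-decomposition-plus-renewal strategy for the total progeny of a killed branching random walk. Set $\rho = \rho_{\star}$ in the critical case and $\rho = \rho_{+}$ in the subcritical case; the defining identities $\psi(\rho_{\star}) = \rho_{\star}\psi'(\rho_{\star})$ and $\psi(\rho_{+}) = 0$ are precisely what is needed for $e^{-n\psi(\rho)}\sum_{|u|=n} e^{\rho V(u)}$ to be a positive martingale supporting a Biggins--Kyprianou change of measure. Under the tilted law the BRW decomposes as a distinguished spine along which the displacement process is the random walk $(S_n)$ of the statement, together with independent copies of the original BRW grafted at each spine vertex. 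The renewal function $R$ of the strict ascending ladder process of $(S_n)$ is harmonic for $(S_n)$ killed at the origin, which is the structural reason why $R(x) e^{\rho x}$ governs the dependence of the tail on the starting point.

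Next I would analyze the generating function $f_x(s) := \mathbb{E}_x[s^Z]$, which satisfies the branching identity
\[
f_x(s) = s \cdot \mathbb{E}_x\Bigl[\prod_{|u|=1,\,V(u) \geq 0} f_{V(u)}(s)\Bigr].
\]
Writing $g_x(s) = 1 - f_x(s)$ and expanding to leading order produces a quasi-linear equation
\[
g_x(s) \approx (1-s) + \mathbb{E}_x\Bigl[\sum_{|u|=1,\,V(u) \geq 0} g_{V(u)}(s)\Bigr] - \text{(quadratic correction)}.
\]
After exponential tilting this becomes a genuine renewal-type equation for $G_x(s) := e^{\rho x} g_x(s)$ driven by the walk $(S_n)$ killed below $0$. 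A Wiener--Hopf / renewal argument then identifies the asymptotics of $G_x(s)$ as $s \uparrow 1$: in the subcritical regime one finds $G_x(s) \sim c_1 R(x) e^{\rho_{+} x} (1-s)^{\rho_{+}/\rho_{-} - 1}$, while in the critical regime the degeneracy $\psi'(\rho_{\star}) = 0$ forces a logarithmic correction yielding $G_x(s) \sim c_2 R(x) e^{\rho_{\star} x} / |\log(1-s)|$ with
\[
c_2 = \frac{\mathbb{Q}_0[e^{-\rho_{\star} S_{\tau_0^-}}] - 1}{d - 1}.
\]
The overshoot factor $\mathbb{Q}_0[e^{-\rho_{\star} S_{\tau_0^-}}] - 1$ arises from the fluctuation theory of the centered walk $(S_n)$ and encodes the size of the first strict descending ladder height, while $1/(d-1)$ comes out of the quadratic correction to the functional equation (essentially the second moment of the offspring law after tilting).

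Finally, a Karamata--de Haan Tauberian theorem translates each asymptotic of $g_x(s)$ as $s \uparrow 1$ into a tail estimate for $Z$, giving the power law $C_1 R(x) e^{\rho_{+} x} n^{-\rho_{+}/\rho_{-}}$ in the subcritical case and the logarithmic tail $c_2 R(x) e^{\rho_{\star} x}/(n(\log n)^2)$ in the critical case. The main obstacle is unquestionably the critical case: extracting the precise $(\log n)^{-2}$ correction together with the explicit constant requires controlling the second-order terms in the linearization of the functional equation in a regime where the naive linear equation is degenerate, combined with sharp fluctuation estimates for $(S_n)$ conditioned to stay nonnegative. This is the technical heart of the A\"id\'ekon--Hu--Zindy argument and parallels earlier works of Maillard and Derrida--Simon for branching Brownian motion.
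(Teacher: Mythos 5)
Note first that the paper does not prove this statement at all: it is imported verbatim (up to notation) as Theorem~1 of \cite{AHZ13+}, and the only work done internally is the identification of the tilted walk $(S_n)$, of $\rho_\star,\rho_\pm$ and of the renewal function $R$ in this particular model. So the relevant benchmark is the A\"{\i}d\'ekon--Hu--Zindy proof itself, and your route is not theirs: they do not pass through generating functions and Tauberian theorems, but argue directly on the particle system, decomposing the progeny via the set $\mathcal{L}[0]$ of particles first entering $(-\infty,0)$, introducing a second barrier at a level $L_n$ of order $\ln n$, doing first/second moment estimates for ``good'' and ``bad'' particles under a spine change of measure, and---this is the crux---using the precise tail behaviour of the limit of the additive martingale $\sum_{|u|=n}e^{\rho_- V(u)}$ in the subcritical case (and of the derivative-type martingale in the critical case). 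That is exactly where the exponent $\rho_+/\rho_-$, the factor $1/(n(\ln n)^2)$ and the constants really come from.

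This points to the genuine gap in your proposal: the two displayed asymptotics for $G_x(s)$ as $s\uparrow 1$, namely $c_1R(x)e^{\rho_+x}(1-s)^{\rho_+/\rho_--1}$ and $c_2R(x)e^{\rho_\star x}/|\log(1-s)|$ with $c_2=(\mathbb{Q}_0[e^{-\rho_\star S_{\tau_0^-}}]-1)/(d-1)$, are precisely the content of the theorem, and in your sketch they are asserted (``a Wiener--Hopf / renewal argument then identifies\dots'') rather than derived. In particular, nothing in your argument explains how the \emph{second} root $\rho_-$ enters: your tilting only involves $\rho_+$ (resp.\ $\rho_\star$), whereas $\rho_-$ appears through the growth rate of the descendance of a particle that has climbed to a high level, i.e.\ through the tail of the martingale limit at parameter $\rho_-$ (a Liu-type fixed-point/Guivarc'h result in \cite{AHZ13+}); likewise the extraction of the critical constant from the ``quadratic correction'' is exactly the degenerate second-order analysis you defer. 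Two smaller caveats: the Tauberian conversion at index $1$ with a slowly varying correction needs de Haan-type theory, not plain Karamata; and the harmonicity of $R$ for the killed walk is only the easy, structural part. A generating-function/FKPP route of the kind you describe is known to work for branching Brownian motion (Maillard, Derrida--Simon), so the plan is not unreasonable, but as written it is a program whose hard steps coincide with the statement to be proved, not a proof.
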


\begin {proof}[Proof of Theorem \ref {thm:tail}]  ~
(i) Suppose that $ \lambda= \lambda_{c}$. We start by finding the step distribution of $(S_{n})_{n \geq 0}$ under $ \mathbb{Q}_{x}$.  If $f: \R \rightarrow \R_{+}$ is a measurable function, as for the calculation of the logarithmic generating function of $V$, write
\begin{eqnarray}
\mathbb{Q}_{0} \left[{f(S_1)} \right] &=&  \sum_{k \geq 0}   \Pr {U=k}   \sum_{i=1}^{k}\E_0 \left [ f( \mathcal{E} - \Exp_{i}( \lambda_{c})) e^{ \rho V( \mathcal{E} - \Exp_{i}( \lambda_{c}))} \right]  \notag\\
&=& \lambda_{c} d \int dx dy  \ \mathbbm {1}_{  \{x,y \geq 0\}} f(x-y) e^{ \rho_{ \star} (x-y)}e^{-x- \lambda_{c} y} =  \frac{ \lambda_{c} d}{ \lambda_{c}+1} \cdot  \int_{ \R} f(u)  e^{ - ( \la_{c}+1) |u|/2}du, \label{eq:jump}
\end{eqnarray}
where we have used the fact that $ \rho_{ \star}=(1- \lambda_{c})/2$ in the last equality. Hence the step distribution of $(S_{n})_{n \geq 1}$ is a symmetric two-sided exponential distribution. This implies that for $x \geq 0$, under $ \mathbb {Q}_{x}$, the random variable $-S_{ \tau_{0}^{-}}$ is distributed according to $ \Exp((1+ \lambda_{c})/2)$ (see e.g. Example (b) in \cite[Sec. VI.8]{Fel71}). In particular,
\begin{equation}
\label{eq:calc}\EsQ { e^{- \rho_{ \star} S_{ \tau_{0}^{-}}}}=  \frac{1+ \lambda_{c}}{2}  \cdot  \int_{0}^{ \infty} dx  \ e^{  \left( {1- \lambda_{c}} \right)  x /2 }e^{ -  \left( {1+ \lambda_{c}} \right) x/2 }= \frac{1+ \lambda_{c}}{2}  \cdot  \int_{0}^{ \infty} dx\ e^{- \lambda_{c} x} = \frac{1+ \lambda_{c}}{2 \lambda_{c}}.
\end{equation}
Hence, by Theorem \ref {thm:AHZ} (i),
\begin{equation}
\label{eq:equiv1} \P_{x} ( Z >n)  \quad\mathop{ \sim}_{n \rightarrow \infty} \quad  \frac{1}{ d-1}  \cdot   \left( \frac{1+ \lambda_c}{ 2 \lambda_c} -1 \right)     \cdot R(x)   e^{ \rho_{ \star}x}   \cdot \frac{1}{n ( \ln(n))^{2}}.\end{equation}
Assertion (i) then follows by taking $x=0$ in \eqref{eq:equiv1} and noting that $R(0)=1$. For the second assertion, it suffices to take $x=0$ in Theorem \ref {thm:AHZ} (ii).
\end {proof}

In the subcritical case $ \lambda \in (0, \lambda_{c})$, the expression of the constant $C_{1}$ appearing in Theorem \ref {thm:AHZ} (ii) is much more complicated than in the critical case: See in particular Lemma 1 in  \cite {AHZ13+} and the expression of the constant $c_{ \rho_{-}}$ in Eq.~(8.18) in  \cite {AHZ13+}, which arises in the asymptotic tail
behavior of the almost sure limit of the martingale $ \sum_{|u|=n} e^{  \rho_{-} V(u)}$ as $n \rightarrow \infty$. Unfortunately, we have not managed to find a simple expression for $C_{1}$ in our particular case.

\subsection{Total progeny of a killed branching random walk with a random starting point}

We keep here the notation introduced in the previous section. The proof of Theorem \ref {thm:tail2} requires the following uniform bounds on $ \P_{x} ( Z >n)$, which may be of independent interest.

\begin {prop} \label {prop:tails} There exists a constant $C_{2}( \lambda,d)>0$ such that the following assertions hold.
\begin{enumerate}
\item[(i)] Suppose that $ \lambda= \lambda_{c}$. For every $ x \geq 0$ and $n \geq 1$,
$\displaystyle \P_{x} ( Z >n) \leq C_{2}( \lambda_{c},d) \cdot (x+1)e^{ \rho_{ \star}x} \cdot  \frac{1}{n ( \ln(n))^{2}}$.
\item[(ii)] Suppose that  $ \lambda \in (0,\lambda_{c})$. For every $x \geq 0$ and $n \geq 1$,
$\displaystyle \P_{x}(Z>n) \leq C_{2}( \lambda,d) \cdot  e^{ \rho_{+} x} \cdot n^{- \frac{ \rho_{+}}{ \rho_{-}}}$.
\end{enumerate}
\end {prop}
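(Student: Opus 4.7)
The strategy is to revisit the proof of Theorem \ref{thm:AHZ} from A\"id\'ekon-Hu-Zindy and to track explicitly the dependence on the starting point $x$. The multiplicative factor $e^{\rho x}$ (with $\rho = \rho_\star$ in the critical case and $\rho = \rho_+$ in the subcritical case) arises transparently from the many-to-one formula: since $\psi(\rho) = 0$ in both cases (using $\psi(\rho_\star) = \rho_\star \psi'(\rho_\star) = 0$ when $\lambda = \lambda_c$, and $\psi(\rho_+) = 0$ by definition when $\lambda < \lambda_c$), for any measurable $F \geq 0$ one has
$$\E_x\left[\sum_{|u|=n} F(V(u_0),\ldots,V(u_n))\right] = e^{\rho x}\,\EsQx{e^{-\rho S_n}\,F(S_0,\ldots,S_n)},$$
so that first-moment quantities associated with survivors of the killed branching random walk started from $x$ differ from those started from $0$ by a factor $e^{\rho x}$ times a random-walk expectation that depends on the starting point.

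For the random-walk estimates, in the critical case I would use the explicit form of the step distribution: by \eqref{eq:jump}, the increments of $(S_n)$ under $\mathbb{Q}_x$ follow a symmetric two-sided exponential law of parameter $(1+\lambda_c)/2$, which gives the linear bound $R(x) \leq C(1+x)$ and allows classical Sparre-Andersen type estimates for probabilities such as $\mathbb{Q}_x(S_k \geq 0,\ \forall\, 0 \leq k \leq n)$ with explicit dependence on $x$. In the subcritical case, $(S_n)$ has positive drift under $\mathbb{Q}_x$ since $\rho_+$ lies to the right of the minimum $\rho_\star$ of the convex function $\psi$; hence $R(x)$ is uniformly bounded, and the Cram\'er-type estimates used in \cite{AHZ13+} extend uniformly in $x$.

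Equipped with these uniform random-walk estimates, I would revisit the spine decomposition argument of \cite{AHZ13+}, splitting the event $\{Z > n\}$ according to the generation at which the killed branching random walk becomes large: the contribution of deep survivors is controlled by a first-moment bound, while the contribution of shallower generations is handled via a Chebyshev-type bound combined with the many-to-one formula. In each step the dependence on $x$ appears only through $R(x) e^{\rho_\star x}$ (critical) or $e^{\rho_+ x}$ (subcritical), producing the claimed uniform estimates after an elementary calculation in $n$ and the use of $R(x) \leq C(1+x)$ in part (i).

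The main obstacle is the bookkeeping of the uniformity in $x$ throughout the AHZ spine decomposition, which is written for $x=0$: extending the technical lemmas there requires carefully reworking the underlying random-walk estimates with an explicit starting-point-dependent version. The explicit form of the step distribution in the critical case greatly simplifies this, as many functionals of the walk, such as $\EsQx{e^{-\rho_\star S_{\tau_0^-}}}$, can be computed exactly as in \eqref{eq:calc}.
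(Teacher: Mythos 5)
Your proposal takes essentially the same route as the paper: the proof given there (attributed to E.~A\"{\i}d\'ekon) likewise reruns the AHZ machinery keeping the starting point $x$ explicit --- reducing $\P_x(Z>n)$ to $\P_x(\#\mathcal{L}[0]>n)$, bounding the number $H(L_n)$ of individuals reaching a second barrier at $L_n$ by a many-to-one first-moment estimate (where the factor $e^{\rho x}$ and a ballot-type bound $\mathbb{Q}_x(\tau_{L}^{+}<\tau_{0}^{-})\leq (x+C)/L$ appear), and controlling the remaining good and bad individuals via the $x$-dependent first- and second-moment lemmas of \cite{AHZ13+} together with Markov's inequality, with the starting-point dependence entering exactly as you predict, through $(1+x)e^{\rho_{\star}x}$ in the critical case and $e^{\rho_{+}x}$ in the subcritical case (where the $\rho_{+}$-tilted walk has positive drift and the renewal function is bounded). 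Two minor inaccuracies in your sketch --- the AHZ decomposition is by spatial level $L_n$ rather than by generation, and in the subcritical case $\rho_{\star}$ is not the minimizer of the convex function $\psi$ (though the relevant fact $\psi'(\rho_{+})>0$ does hold) --- do not affect the soundness of the approach.
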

We postpone the proof of Proposition \ref {prop:tails} to Section \ref {sec:appendix}.

\begin {proof}[Proof of Theorem \ref {thm:tail2}] Recall that  $ \mathcal{A} \subset \mathcal{T}$ is a finite connected subset of $ \mathcal{T}$ containing $ \emptyset$. To simplify notation,  denote by $Z( \mathcal{T}, \mathcal{A} ,x)$ the total number of infected individuals in the chase-escape process $ \textsf{C}( \mathcal{T}, \mathcal{A} ,x)$. For $ u \in  \partial \mathcal{A}$,  let $R_{u}$ be the first time when $ \overleftarrow {u}$ recovers in the chase-escape process $ \textsf{C}( \mathcal{T},  \mathcal{A} ,0)$. In particular, $R_{u}$ has the same distribution as the sum of $|u|$ independent exponential random variables of parameter $1$. Let also $(\mathcal{E}_{u}; u \in  \partial \mathcal{A})$ be a collection of  independent exponential random variables of parameter $ \lambda$, independent of $(R_{u}; u \in  \partial A)$. If $u \in \partial \mathcal{A}$, without the presence of recovered vertices, $  \overleftarrow {u}$ would infect $u$ after a time distributed as $ \mathcal{E}_{u}$. Hence, if $ \mathcal{T}_{u}$ denotes the tree of descendants of $u$ (including $u$), then  the number of individuals of $ \mathcal{T}_{u}$  that will be infected in the chase-escape process $ \textsf{C}( \mathcal{T},  \mathcal{A} ,0)$ has the same distribution as $Z( \mathcal{T}_{u} ,  \{  \emptyset\} , R_{u}-\mathcal{E}_{u}) \mathbbm {1}_{ \{ R_{u}> \mathcal{E}_{u} \} }$, where the random variables $(Z( \mathcal{T}_{u} ,  \{  \emptyset\} , R_{u}-\mathcal{E}_{u}) \mathbbm {1}_{ \{ R_{u}> \mathcal{E}_{u} \} }; u \in \partial \mathcal{A})$ are independent conditionnally on $(R_{u}; u \in \partial \mathcal{A})$, and in addition this equality holds jointly in distribution for all $u \in \partial \mathcal{A}$. Now let $u_{1}, \ldots, u_{K}$ be an enumeration of the vertices of $ \partial \mathcal{A}$.  By the previous discussion, we have
\begin{eqnarray}
\Pr {Z_{ \mathcal{A} }>n \ \big| \ R_{u_{1}}, \ldots ,R_{u_{K}}} &=& \Pr { |\mathcal{A}| + \sum_{i=1}^{K} Z( \mathcal{T}_{u_{i}} ,  \{  \emptyset\} , R_{u_{i}}-\mathcal{E}_{u_{i}}) \mathbbm {1}_{ \{ R_{u_{i}}> \mathcal{E}_{u_{i}} \} } >n \  \big | \ R_{u_{1}}, \ldots ,R_{u_{K}}} \label {eq:affr}
\end{eqnarray}
where by denote by $| \mathcal{A}|$ the cardinal of the set $ \mathcal{A}$. Moreover, by Corollary \ref {cor:coupl1},
$$ \Pr {Z( \mathcal{T}_{u_{i}} ,  \{  \emptyset\} , R_{u_{i}}-\mathcal{E}_{u_{i}}) \mathbbm {1}_{ \{ R_{u_{i}}> \mathcal{E}_{u_{i}} \} }>n  \ \big| \ R_{u_{1}}, \ldots ,R_{u_{K}} } = \Es { \P_{R_{u_{i}} -\mathcal{E}_{u_{i}}}(Z>n)   \mathbbm {1}_{ \{ R_{u_{i}}> \mathcal{E}_{u_{i}} \} }  \ \big| \ R_{u_{i}} }$$
for every $1 \leq i \leq K$.

We start by proving (i), where $ \lambda= \lambda_{c}$. Recall from \eqref{eq:jump} that under $ \Q_{x}$, $(S_{n})_{n \geq 0}$ is a random walk with step distribution a symmetric two-sided exponential distribution.  The renewal function $R$ is thus explicit: $R(x)=1+ (1+ \la_{c}) x/2$ for $x \geq 0$ (see e.g. Eq.~(4.3) of Chapter XII.4 in \cite{Fel71}).  Hence by Theorem \ref {thm:AHZ} (i) and \eqref{eq:calc}, 
\begin{eqnarray*}
&& n ( \ln(n))^{2} \cdot \P_{R_{u_{i}} -\mathcal{E}_{u_{i}}}(Z>n)   \mathbbm {1}_{ \{ R_{u_{i}}> \mathcal{E}_{u_{i}} \} } \\
&& \qquad \qquad  \quad  \mathop { \longrightarrow}_{n \rightarrow \infty} \quad \frac{1- \lambda_c}{ 2 \lambda_c(d-1)}     \cdot  \left( 1+  \frac{(1+ \la_{c}) (R_{u_{i}}-\mathcal{E}_{u_{i}})}{2} \right)    e^{ \rho_{ \star}(R_{u_{i}}-\mathcal{E}_{u_{i}})}    \cdot  \mathbbm {1}_{ \{ R_{u_{i}}> \mathcal{E}_{u_{i}} \} }.
\end{eqnarray*}
Proposition \ref {prop:tails} (i) allows us to use the conditioned dominated convergence theorem to deduce that
 \begin{eqnarray*}
  \Es { \P_{R_{u_{i}} -\mathcal{E}_{u_{i}}}(Z>n)   \mathbbm {1}_{ \{ R_{u_{i}}> \mathcal{E}_{u_{i}} \} }
 \ \big| \ R_{u_{i}} }  & \displaystyle   \mathop {\sim}_{n \rightarrow \infty} & G(R_{u_{i}}) \cdot \frac{1}{n ( \ln(n))^{2}},
 \end{eqnarray*}
 where $G(x)$ is defined by
 $$G(x)= \frac{1- \lambda_c}{ 2 \lambda_c(d-1)}          \cdot  \Es{ \left( 1+  \frac{(1+ \la_{c}) (x-\mathcal{E}_{u_{i}})}{2} \right)    e^{ \rho_{ \star}(x-\mathcal{E}_{u_{i}})}   \mathbbm {1}_{ \{ x> \mathcal{E}_{u_{i}} \} }}, \qquad x \geq 0.$$
 But if $X_{1}, \ldots, X_{K}$ are independent random variables such that $ \Pr {X_{i}>n} \sim  \kappa_{i} /(n \ln(n)^{2})$ as $n \rightarrow \infty$, since $1/ \ln(n)^{2}$ is regularly varying, then $ \Pr {X_{1}+ \cdots + X_{K}>n} \sim (\kappa_{1}+ \kappa_{2} + \cdots  \kappa_{K})  /(n \ln(n)^{2})$  (see e.g. \cite[Proposition in Sec. VIII.8]{Fel71}). Thus
$$\Pr { |\mathcal{A}| + \sum_{i=1}^{K} Z( \mathcal{T}_{u_{i}} ,  \{  \emptyset\} , R_{u_{i}}-\mathcal{E}_{u_{i}}) \mathbbm {1}_{ \{ R_{u_{i}}> \mathcal{E}_{u_{i}} \} } >n \ \big| \ R_{u_{1}}, \ldots ,R_{u_{K}}} \quad  \mathop {\sim}_{n \rightarrow \infty} \quad   \left( \sum_{i=1}^{K} G(R_{u_{i}})  \right)  \cdot \frac{1}{ n \ln(n)^{2}}.$$
 Observing that
 \begin{eqnarray*}
 && \Pr { |\mathcal{A}| + \sum_{i=1}^{K} Z( \mathcal{T}_{u_{i}} ,  \{  \emptyset\} , R_{u_{i}}-\mathcal{E}_{u_{i}}) \mathbbm {1}_{ \{ R_{u_{i}}> \mathcal{E}_{u_{i}} \} } >n \ \big| \ R_{u_{1}}, \ldots ,R_{u_{K}}}  \\
 && \qquad \qquad \leq    \sum_{i=1}^{K} \Pr { Z( \mathcal{T}_{u_{i}} ,  \{  \emptyset\} , R_{u_{i}}-\mathcal{E}_{u_{i}}) \mathbbm {1}_{ \{ R_{u_{i}}> \mathcal{E}_{u_{i}} \} } > (n-|\mathcal{A}|)/K  \ \big| \ R_{u_{1}}, \ldots ,R_{u_{K}} }  \\
 && \qquad \qquad =   \sum_{i=1}^{K} \Es { \P_{R_{u_{i}} -\mathcal{E}_{u_{i}}}(Z>(n-|\mathcal{A}|)/K)   \mathbbm {1}_{ \{ R_{u_{i}}> \mathcal{E}_{u_{i}} \} }
 \ \big| \ R_{u_{i}} },
 \end{eqnarray*}
we combine once again Proposition \ref {prop:tails} (i) with the dominated convergence theorem, to get, using \eqref{eq:affr}, that 
$$ \Pr {Z_{ \mathcal{A} }>n} =  \Es {\Pr {Z_{ \mathcal{A} }>n \ \big| \ R_{u_{1}}, \ldots ,R_{u_{K}}}}  \quad  \mathop {\sim}_{n \rightarrow \infty} \quad    \left( \sum_{i=1}^{K}  \Es{G(R_{u_{i}})}  \right)  \cdot \frac{1}{ n \ln(n)^{2}}.
 $$
  It hence remains to compute  $\Es{G(R_{u_{i}})}$. Recall that $R_{u_{i}}$ has the same distribution as the sum of $|u_{i}|$ independent exponential random variables of parameter $1$, and to simplify notation set $k=|u_{i}|$. Then using a change of variables and Fubini's theorem, write for every measurable function  $F : \R \rightarrow \R_{+}$:
  \begin{eqnarray}
  \Es {F( R_{u_{i}}- \mathcal{E}_{u_{i}}) \mathbbm {1}_{ \{ R_{u_{i}}> \mathcal{E}_{u_{i}} \} }}&=& \int  dx dy \ F(x-y) \frac{x^{k-1} e^{-x}}{ \Gamma(k)} \lambda e^{- \lambda y} \mathbbm {1}_{ \{ x>y \geq 0\} } \notag\\
  &=&   \int_{0}^{ \infty} du \ \frac{ \lambda u^{k-1} e^{-u(1+ \lambda)}}{ \Gamma(k)} \int_{0}^{u} F(v) e^{ \lambda v} dv. \label {eq:fub}
  \end{eqnarray}
  Recall that $\rho_{ \star}=(1- \lambda_{c})/2$. Since $ \int_{0}^{u} (1+ (1+ \lambda_{c})v/2) e^{(1- \lambda_{c}) v/2} e^{ \lambda_{c} v} dv=u \cdot e^{(1+ \lambda_{c})u/2}$, a straightforward calculation yields
  $$\Es{G(R_{u_{i}})}=\Es {F( R_{u_{i}}- \mathcal{E}_{u_{i}}) \mathbbm {1}_{ \{ R_{u_{i}}> \mathcal{E}_{u_{i}} \} }}= \frac{1- \lambda_c}{ 2 \lambda_c(d-1)}  \cdot \int_{0}^{ \infty} du \ \frac{ \lambda_c u^{k-1} e^{-u(1+ \lambda_c)}}{ \Gamma(k)} u \cdot e^{(1+ \lambda_{c})u/2}.$$
  Hence
  $$\Es{G(R_{u_{i}})}=
  \left( 1+ \sqrt{\frac{ d}{d-1}} \right)  \cdot  \frac{ \lambda_{c} |u_{i}|   }{((1+ \lambda_{c})/2)^{1+|u_{i}|}}=   \left( 1+ \sqrt{\frac{ d}{d-1}} \right)  \cdot\left(   \frac{|u_{i}|}{d \cdot (d- \sqrt {d(d-1)})^{|u_{i}|-1}} \right).$$
  This completes the proof of (i).
  
  The proof of the second assertion, where $ \lambda \in (0,  \lambda_{c})$ is very similar, so we only sketch the main calculations. Under $ \Q_{x}$, $(S_{n})_{n \geq 0}$ is still a random walk with step distribution a (non-symmetric) two-sided exponential distribution, with density $ \frac{ \lambda d}{d+1} e^{-(1- \rho_{+}) u}$ on $ \R_{+}$ and density $\frac{ \lambda d}{d+1} e^{( \lambda+ \rho_{+}) u}$ on $ \R_{-}$. By 
  \cite[Example (a) (ii) in XII.4]{Fel71}, the renewal function $R$ is equal to 
  $$R(x)= \frac{ \rho_{+}+ \lambda}{2 \rho_{+}+ \lambda-1}+ \frac{ \rho_{+}-1}{2 \rho_{+}+ \lambda-1} e^{- (2 \rho_{+}+ \lambda-1) x}.$$
 (In our case, the value of $ \kappa$ defined in \cite[Example (a) (ii) in XII.4]{Fel71} is $ \kappa= 2 \rho_{+}+ \lambda-1$.) Hence
 $$ \Pr {Z_{ \mathcal{A} }>n}  \quad  \mathop {\sim}_{n \rightarrow \infty} \quad    \left( \sum_{i=1}^{K}  \Es{H(R_{u_{i}})}  \right)  \cdot n^{- \frac{ \rho_{+}}{ \rho_{-}}}
 $$
 where $H(x)$ is defined by
 $$H(x)= C_{1}   \cdot  \Es{ \left(  \frac{ \rho_{+}+ \lambda}{2 \rho_{+}+ \lambda-1}+ \frac{ \rho_{+}-1}{2 \rho_{+}+ \lambda-1} e^{- (2 \rho_{+}+ \lambda-1) (x-\mathcal{E}_{u_{i}} )}  \right) e^{ \rho_{ +}(x-\mathcal{E}_{u_{i}})} \mathbbm {1}_{ \{ x> \mathcal{E}_{u_{i}} \} }}, \qquad x \geq 0.$$
We compute $\Es{H(R_{u_{i}})}$  by using \eqref{eq:fub}: Since  $ \int_{0}^{u} R(v) e^{ \rho_{+} v}  \cdot e^{ \lambda v} dv= (e^{ ( \lambda+ \rho_{+})u} - e^{u(1- \rho_{+})})/( 2 \rho_{+}+ \lambda-1)$, we get 
$$\Es{H(R_{u_{i}})}= C_{1} \frac{ \lambda \left(  (1- \rho_{+}) ^{-k}-( \lambda+ \rho_{+})^{-k} \right) }{2 \rho_{+}+ \lambda-1}=C_{1} \cdot     \frac{ \lambda}{ \sqrt {\lambda^{2}-2 \lambda (2d-1)+1}}  ( (\rho_{-}+ \lambda)^{-|u|}- (\rho_{+}+ \lambda)^{-|u|}).$$
This completes the proof.
\end {proof}

Observe that this proof shows that Theorem \ref {thm:tail2} holds when $ \mathcal{T}$ is a $ \nu$-Galton--Watson tree (under the same integrability assumption as in Theorem \ref {thm:tail}) and when $ \mathcal{A}$ is chosen in such a way that the trees $( \mathcal{T}_{u}; u \in \partial \mathcal{A})$ are independent $ \nu$- Galton--Watson trees.

\subsection{Reaching high generations}

In this section, we state a conjecture concerning the asymptotic behavior of the probability that the infection reaches high levels of the tree. Assume that $ \nu(d)=1$ where $d \geq 1$ is an integer, so that  $ \mathcal{T}$ is the infinite $d$-ary tree. Recall that $Z_{n}$ denotes the number of individuals of the $n$-th level of the tree $ \mathcal{T}$  that have been infected in the chase-escape process $ \textsf{C}( \mathcal{T},  \{ \emptyset\},0)$. Finally set $ \gamma= 4 \lambda/(1+ \lambda)^{2}$. 

We believe that the following results hold:
\begin{enumerate}
\item[(i)] If $ \lambda \in (0, \la_{c})$, there exists a constant $C_{3}>0$, depending only on $ \lambda$ and $d$, such that 
 $ \Pr{Z_{n}>0} \sim C_{3} \cdot (\gamma d)^n  n^{-3/2}$ as $n \rightarrow \infty$.
\item[(ii)]  If $ \lambda=\lambda_{c}$, we have  $  \ln \Pr {Z_{n}>0} \sim  -  \left( 3 \left( 1- {1}/{d} \right) \pi^{2} \right)^{1/3} \cdot  n^{1/3}$  as $n \rightarrow \infty$.\end{enumerate}

Aïdékon \& Jaffuel \cite {AJ11} proved this result for branching random walks on $ \mathcal{T}$ with i.i.d.~displacements. However, we believe that even if in our case the displacements are not independent, an analog result should hold, and that their proof could be adapted (see in particular \cite{Jaf12}, where the displacements are not supposed to be independent). We also believe that this should hold more generally for  Galton--Watson trees under adequate integrability conditions.

\section{Birth-and-assassination process}
 \label {sec:BA}
 
 We are now interested in the asymptotic behavior of the birth-and-assassination process at criticality.
Denote by $ \widetilde { \P}$ the law of the branching random walk $(\widetilde{V}(u); u \in \mathcal{U})$ on $ \mathcal{U} = \cup_{n \geq 0} \N^{n}$ produced with the point process $$ \widetilde{\mathcal{L}} =   \delta_{ \{ \mathcal{K}_{ \emptyset}-\Exp_{1}( \lambda)\} }+    \delta_{ \{ \mathcal{K}_{\emptyset}-(\Exp_{1}( \lambda)+\Exp_{2}( \lambda))\} } +  \delta_{ \{ \mathcal{K} _{\emptyset}-(\Exp_{1}( \lambda)+\Exp_{2}( \lambda)+\Exp_{3}( \lambda))\}} +\cdots, $$
where $(\Exp_{i}( \lambda))_{i \geq 1}$ is an i.i.d.~collection of exponential random variables of parameter $ \lambda$, with starting point $0$ and independent of $ K_{ \emptyset}$.

Introduce the logarithmic generation function $ \widetilde { \psi}(t)=\ln \E_{0}  \left[ \sum_{|u|=1} e^{t \widetilde{V}(u)} \right] $  of this branching random walk. Recalling that $ \phi(t)= \Es { e^{t\mathcal{K}_{ \emptyset} }}$ is the moment-generating function of $ \mathcal{K}_{ \emptyset}$, a straightforward calculation yields
$$ \widetilde {\psi}(t)=  \ln \left(  \sum_{i=1}^{ \infty} \Es {e^{t \mathcal{K}_{\emptyset} }}  \Es {e^{-t \Exp_{1}( \lambda)}}^{i} \right)= \ln \left(   \phi(t) \sum_{i=1}^{ \infty} \left( \frac{ \la}{ \la+t} \right)^{i} \right)= \ln \left(  \frac{ \la \phi(t)}{t} \right), \qquad t \geq 0.$$

\begin {proof}[Proof of Theorem \ref {thm:BA}] 
The proof is very similar to that one we gave for Proposition \ref {prop:crit}: assuming that $\min_{u>0} \lambda u^{-1} \phi(u)=1$ and that this minimum is attained at $u_{ \star}>0$, we have $\widetilde { \psi}'(u_{ \star})=0$, so that  by  \cite [Lemma 5]{Bigg77} (see also \cite{Rus87}), the martingale $W_{n}:= \sum_{|u|=n}e^{ u_{ \star}\widetilde{V}(u)}$ converges almost surely towards $0$ as $ n \rightarrow \infty$. This implies that under $\widetilde { \P}$,
$$ \max_{|u|=n} \widetilde{V}(u) \to - \infty \qquad  a.s.$$
Hence the number of surviving individuals in the branching random walk $\widetilde{V}$ killed at the barrier $0$. Theorem \ref {thm:BA} then simply follows from Corollary \ref {cor:BA}.
\end {proof}

Now assume that $K_{ \emptyset}$ is an exponential random variable of parameter $1$, which implies that $ \widetilde {\psi}(t)= \ln \left( { \la} /(t(1-t)) \right)$, so that the critical parameter is $ \lambda=1/4$. In this case, if $N$ denotes the total number of born individuals in the birth-and-assassination process, using analytical methods Bordenave \cite[Theorem 2]{Bor08} showed that for every $ \lambda \in (0,1/4)$,
\begin{equation}
\label{eq:B}\sup \{ u \geq 0;  \Es { N^{u}}< \infty\}= \frac{1+ \sqrt {1- 4 \lambda}}{1- \sqrt {1-4 \lambda}}
\end{equation}and that $ \Es {N}=2$ for $ \lambda=1/4$. In this case, and also when $K_{ \emptyset}$ is any positive random variable, it would be interesting to find whether similar phenomena as in the chase-escape process occur for the tail of the total progeny and the probability of having individuals born at large generations occur in the birth-and-assassination process. Unfortunately, we may not apply known results on killed branching random walks since here on the one hand each individual has an infinite number of offspring (the branching is infinite) and on the other hand the logarithmic generation function $ \widetilde { \psi}$ is not finite on a neighborhood of the origin. However, it seems likely that this is not an obstacle to have an analog of Theorem \ref{thm:AHZ} to hold in this case as well. It is plausible that, in general, if $\min_{u>0} \lambda u^{-1} \phi(u)=1$, then $ \Pr{N>n} \sim C/ ( n \ln(n)^2)$ as $n \rightarrow \infty$ for a certain $C>0$, and  if $\min_{u>0} \lambda u^{-1} \phi(u)<1$, then $ \Pr{N>n}  \sim  C  \cdot n^{- \widetilde{ \rho}_{+}/ \widetilde{ \rho}_{-} } $ for a certain $C>0$, where $ 0< \widetilde{ \rho}_{-} < \widetilde{ \rho}_{+}$ are such that 
$$\lambda \frac{\phi(\widetilde{ \rho}_{-})}{\widetilde{ \rho}_{-}} = \lambda \frac{\phi(\widetilde{ \rho}_{+})}{ \widetilde{ \rho}_{+}} =1.$$
In this case, notice that by convexity of $t \mapsto \Es{ e^{t \mathcal{K}_{ \emptyset} }}/t$, there exist exactly two values of $t>0$ such that $\lambda t^{-1} \phi(t)=1$. In addition, when $K_{ \emptyset}$ is an exponential random variable of parameter $1$, we have $ \widetilde{ \rho}_{ \pm}=(1\pm \sqrt{1-4 \lambda})/2$, so that this prediction is consistent with \eqref{eq:B}.

\section {Proof of the technical estimate}
\label {sec:appendix}

We first introduce some notation which appears in \cite{AHZ13+}. We shall refer the reader very often to \cite {AHZ13+} to stay as concise as possible.

Let $ \mathcal{L}[0]$ be the set of all the individuals of the (non-killed) branching random walk which lie below $0$ for its first time, see  \cite[Eq.~(1.8) and Fig.~1]{AHZ13+}. For $L>0$, let $H(L)$ be the number of  individuals of the branching random walk on $[0,L]$ with two killing barrier which were absorbed at level $L$ (see \cite[Eq.~(1.10) and Fig.~2]{AHZ13+}). Then let $Z[0,L]$ be the number of individuals of  $\mathcal{L}[0]$ which have not crossed level $L$ (see \cite[Eq.~(1.12)]{AHZ13+}). These individuals are partitioned into  \emph{good} and \emph{bad} individuals, whose number is denoted respectively by $Z_{g}[0,L]$ and $Z_{b}[0,L]$ (and taking the parameter $ \lambda=1$ appearing in their definition, see \cite[Eq.~(7.2)]{AHZ13+} for the critical case and \cite[Eq.~(8.5)]{AHZ13+} for the subcritical case). Finally, for a random walk $(S_{n})_{n \geq 0}$ set $ \tau_{a}^{+}= \inf \{ k \geq 0; S_{k}>a\}$ and $ \tau_{a}^{-}= \inf \{ k \geq 0; S_{k}<a\}$ for $a \in \R$.  

\begin {proof}[Proof of Proposition \ref {prop:tails}] The following proof is due to Elie Aïdékon. A close inspection of the proof of Lemma 2 of \cite {AHZ13+} shows that it is sufficient to establish Proposition \ref {prop:tails} when $ \Prx {Z>n}$ is replaced by $ \Prx { \#\mathcal{L}[0]>n}$. In the sequel $C$ denotes a positive constant which may change from line to line.
 
 We first concentrate on the critical case $ \la= \la_{c}$. By a linear transformation on $V$, we may assume that $ \rho_{ \star}=1$. We first claim that that there exists a constant $C>0$ such that for every $L > 0$ and $x \geq 0$, $\Esx{H(L)} \leq C {e^{x}(1+x) e^{-L}}/{L}$. It is enough to check this inequality for $0<x<L$. In this case, using Proposition 3 in \cite {AHZ13+} (applied to $ \mathcal{C}_{L}$, $ \lambda=1$ and $h(u)=e^{V(u)}$ with $ \mathbb {Q}_{x}$ defined by \cite[Eq.~(5.16)]{AHZ13+}), we have
 $$\Esx{H(L)} =e^{  x} \mathbb {Q}_{x} \left[ e^{ -\tau_{L}^{+}} \mathbbm {1}_{  \{ \tau_{L}^{+} < \tau_{0}^{-}\} }\right] \leq e^{x-L}\mathbb {Q}_{x}( \tau_{L}^{+}< \tau_{0}^{-}) \leq   e^{x-L} \frac{x+C}{L},$$
 where we have used \cite[Eq.~(4.12)]{AHZ13+} for the last inequality. This yields our claim. In particular, taking $L=L_{n}= \ln n+ \ln \ln n$,  we get
 \begin{equation}
 \label{eq:t2} \Prx {H(L_{n})) \geq 1} \leq C  (1+x)e^{x} \frac{1}{n (\ln(n))^{2}}.
 \end{equation}
 Next, by Lemma 13 in \cite{AHZ13+} we have $ \Esx {Z_{b}[0,L_n]} \leq C (1+x) e^{x} \cdot   (\ln(n))^{-2}$ and  by Lemma 14 in \cite{AHZ13+} we have $ \Esx {Z_{g}[0,L_n]^{2}} \leq C (1+x) e^{x}  \cdot  n(\ln(n))^{-2}$. Hence using Markov's inequality we get
 \begin{equation}
 \label{eq:t3} \Prx {Z_{b}[0,L_n] \geq n/2}  \leq C  (1+x)e^{x}  \cdot \frac{1}{n (\ln(n))^{2}} ,\qquad \Prx {Z_{g}[0,L_n] \geq n/2}  \leq C  (1+x)e^{x}  \cdot \frac{1}{n (\ln(n))^{2}}.
 \end{equation}
  The conclusion then follows  by observing that $ \Prx { \#\mathcal{L}[0]>n} \leq   \Prx {H(L_{n}) \geq 1}+  \Prx {Z[0,L_n]>n}$ and using \eqref {eq:t2} and \eqref {eq:t3}.
  
  We now turn to the subcritical case. Using  Proposition 3 in \cite {AHZ13+} (applied to $ \mathcal{C}_{L}$, $ \lambda=1$ and $h(u)=e^{ \rho_{-}  V(u)}$ with $ \mathbb {Q}^{( \rho_{-})}_{x}$ defined in the beginning of Sec.~8 in \cite{AHZ13+}) and Eq.~(8.1) in \cite{AHZ13+}, one similarly shows that
  $$ \Esx {H(L)}= e^{ \rho_{-}x}  \mathbb {Q}^{( \rho_{-})}_{x} \left[ e^{ - \rho_{-}\tau_{L}^{+}} \mathbbm {1}_{  \{ \tau_{L}^{+} < \tau_{0}^{-}\} }\right] \leq e^{  \rho_{-}x-\rho_{-}L}\mathbb {Q} ^{(\rho_{-})}_{x}( \tau_{L}^{+}< \tau_{0}^{-}) \leq  e^{ \rho_{-} x- \rho_{+} L}.$$
  Hence, taking $L=L_{n}= \ln(n)/ \rho_{-}$ we get $ \Prx {H(L_{n}) \geq 1} \leq e^{ \rho_{-} x} n^{- \rho_{+}/ \rho_{-}}$. By Lemma 19 in \cite{AHZ13+},  $ \Esx {Z_{g}[0,L_{n}]^{k_{ \star}}} \leq C e^{ \rho_{+}x}  \cdot n^{k_{ \star}- \rho_{+}/ \rho_{-}}$ with $k_{ \star}= \lfloor \rho_{+}/ \rho_{-} \rfloor+1$, and by Lemma 20 (i) in \cite{AHZ13+}, $ \Esx {Z_{b}[0,L_{n}]} \leq C e^{ \rho_{+} x} \cdot n^{1- \rho_{+}/ \rho_{-}}$. Markov's inequality then entails
  $$\Prx {Z_{b}[0,L_n] \geq n/2}  \leq C e^{ \rho_{+} x} \cdot n^{- \rho_{+}/ \rho_{-}},\qquad \Prx {Z_{g}[0,L_n] \geq n/2}  \leq C e^{ \rho_{+} x} \cdot n^{- \rho_{+}/ \rho_{-}}.$$
  We conclude as in the critical case.
\end {proof}

\bibliographystyle{siam}

\end{document}